\newtheorem{theorem}{Theorem}[section]
\newtheorem{lemma}[theorem]{Lemma}
\newtheorem{proposition}[theorem]{Proposition}
\newtheorem{corollary}[theorem]{Corollary}
\theoremstyle{definition}
\newtheorem{definition}[theorem]{Definition}
\newtheorem{example}[theorem]{Example}
\theoremstyle{remark}
\newtheorem{remark}[theorem]{Remark}
\numberwithin{equation}{section}
\begin{document}
	
	\setcounter{page}{1}
	
	\title[$K-bi-g$-frames in Hilbert spaces]{$K-bi-g-$frames in Hilbert spaces}

	\author[A. Karara, M. Rossafi]{Abdelilah Karara$^{1}$ and Mohamed Rossafi$^{2*}$}

\address{$^{1}$Department of Mathematics Faculty of Sciences, University of Ibn Tofail, B.P. 133, Kenitra, Morocco}
\email{\textcolor[rgb]{0.00,0.00,0.84}{abdelilah.karara@uit.ac.ma}}
\address{$^{2}$Department of Mathematics Faculty of Sciences, Dhar El Mahraz University Sidi Mohamed Ben Abdellah, Fez, Morocco}
\email{\textcolor[rgb]{0.00,0.00,0.84}{mohamed.rossafi@usmba.ac.ma; mohamed.rossafi@uit.ac.ma}}
\date{
	\newline \indent $^{*}$ Corresponding author}
	
	\subjclass[2010]{42C15; 46C05; 47B90.}
	
	\keywords{Frame, $K$-frame, biframe, $K$-bi-$g$-frames, Hilbert spaces.}
	
	\begin{abstract}
In this paper, we will introduce the new concept of $K$-bi-$g-$frames for Hilbert spaces. Then, we examine some characterizations with the help of a biframe operator. Finally, we investigate several results about the stability of $K$-bi-$g$-frames are produced via the use of frame theory methods.
\end{abstract}
\maketitle

\baselineskip=12.4pt

\section{Introduction}
 
\smallskip\hspace{.6 cm} The notion of frames in Hilbert spaces was introduced by Duffin and Schaffer \cite{Duffin} in 1952 to research certain difficult nonharmonic Fourier series problems. Following the fundamental paper \cite{Daubechies} by Daubechies, Grossman, and Meyer, frame theory started to become popular, especially in the more specific context of Gabor frames and wavelet frames \cite{Gabor}. A sequence $\left\{\Phi_i\right\}_{i\in I}$ in $\mathcal{H}$ is called a frame for $\mathcal{H}$ if there exist two constants $0<A \leq B<\infty$ such that
$$
A\left\| x\right\|^2 \leq \sum_{i\in I}\vert\left\langle x, \Phi_i\right\rangle\vert^{2}\leq B\left\| x\right\|^2, \text { for all } x \in \mathcal{H} .
$$ For more detailed information on frame theory, readers are recommended to consult: \cite{Christensen, Daubechies2, Han}.

The concept of $K$-frames was introduced by Laura Găvruţa \cite{Gav}, serves as a tool for investigating atomic systems with respect to a bounded linear operator $K$ in a separable Hilbert space. A sequence $\left\{\Phi_i\right\}_{i\in I}$ in $\mathcal{H}$ is called a $K$-frame for $\mathcal{H}$ if there exist two constants $0<A \leq B <\infty$ such that
$$
A\left\|K^{\ast} x\right\|^2 \leq \sum_{i\in I}\vert\left\langle x, \Phi_i\right\rangle\vert^{2} \leq B\|x\|^2, \text { for all } x \in \mathcal{H} .
$$ 

The notion of $K$-frames generalize ordinary frames in that the lower frame bound is applicable only to elements within the range of $K$. After that Xiao et al. \cite{Xiao} introduced the concept of a K-g-frame, which is a more general framework than both g-frames and K-frames in Hilbert spaces.

The idea of pair frames, which refers to a pair of sequences in a Hilbert space, was first presented in \cite{Fer} by Azandaryani and Fereydooni. Parizi, Alijani and Dehghan \cite{MF} studied biframe, which is a generalization of controlled frame in Hilbert space. The concept of a frame is defined by a single sequence, but to define a biframe we will need two sequences. In fact, the concept of a biframe is a generalization of controlled frames and a special case of pair frames.

 In this paper, we will introduce the concept of $K$-bi-$g$-framess in Hilbert space and present some examples of this type of frame. Moreover, we investigate a characterization of $K$-bi-$g$-frames by using the biframe operator. Finally, in our exploration of biframes, we investigate  some results about the stability of $K$-bi-$g$-frames are produced via the use of frame theory.

\section{Notation and preliminaries}
Throughout this paper, $\mathcal{H}$ represents a separable Hilbert space. The notation $\mathcal{B}(\mathcal{H},\mathcal{K})$ denotes the collection of all bounded linear operators from $\mathcal{H}$ to the Hilbert space $\mathcal{K}$. When $\mathcal{H} = \mathcal{K}$, this set is denoted simply as $\mathcal{B}(\mathcal{H})$. We will use $\mathcal{N}(\mathcal{T})$ and $\mathcal{R}(\mathcal{T})$ for the null  and range space of an operator $\mathcal{T}\in \mathcal{B}(\mathcal{H})$. Also $\mathrm{GL}(\mathcal{H})$ is the collection of all invertible, bounded linear operators acting on $\mathcal{H}$. $\left\{\mathcal{K}_i\right\}_{i \in I}$ is a sequence of closed subspaces of $\mathcal{H}$, where $I$ is a finite or countable index set. $\ell^2\left(\left\{\mathcal{K}_i\right\}_{i \in I}\right)$ is defined by
$$
\ell^2\left(\left\{\mathcal{K}_i\right\}_{i \in I}\right)=\left\{\left\{x_i\right\}_{i \in I}: x_i \in \mathcal{K}_i, \quad i \in I, \quad \sum_{i \in I}\left\|x_i\right\|^2<+\infty\right\}
$$
with the inner product
$$
\left\langle\left\{x_i\right\}_{i \in I},\left\{y_i\right\}_{i \in I}\right\rangle=\sum_{i \in I}\left\langle x_i, y_i\right\rangle .
$$

Certainly, let's begin with some preliminaries. Before diving into the details, let's briefly recall the definition of a biframe:
\begin{definition}\cite{MF}
A pair $(\Phi, \Psi)=\left(\left\{\phi_i\right\}_{i\in I},\left\{\Psi_i\right\}_{i\in I}\right)$ in $\mathcal{H}$ is called a biframe for $\mathcal{H}$ if there exist two constants $0<A \leq B<\infty$ such that
$$
A\left\| x\right\|^2 \leq \sum_{i\in I} \left\langle x, \Phi_i\right\rangle \left\langle \Psi_i, x\right\rangle \leq B\left\| x\right\|^2, \text { for all } x \in \mathcal{H} .
$$

\end{definition}
\begin{theorem}\cite{Abramovich}
$\mathcal{T} \in\mathcal{B}(\mathcal{H})$ is an injective and closed range operator if and only if there exists a constant $c>0$ such that $c\|x\|^2 \leq\|\mathcal{T}  x\|^2$, for all $x \in \mathcal{H}$
\end{theorem}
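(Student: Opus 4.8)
The plan is to prove the two implications separately; the only nontrivial one rests on the Bounded Inverse Theorem.

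\emph{Sufficiency.} Suppose there exists $c>0$ with $c\|x\|^2 \le \|\mathcal{T}x\|^2$ for all $x\in\mathcal{H}$. Injectivity is immediate: if $\mathcal{T}x=0$, then $c\|x\|^2\le 0$, hence $x=0$, so $\mathcal{N}(\mathcal{T})=\{0\}$. For closedness of $\mathcal{R}(\mathcal{T})$, I would take a sequence $\{\mathcal{T}x_n\}_{n}$ in $\mathcal{R}(\mathcal{T})$ converging to some $y\in\mathcal{H}$. Since $\{\mathcal{T}x_n\}_n$ is then Cauchy and
\[
\|x_n-x_m\|^2 \le \frac{1}{c}\,\|\mathcal{T}x_n-\mathcal{T}x_m\|^2,
\]
the sequence $\{x_n\}_n$ is Cauchy in $\mathcal{H}$, hence converges to some $x\in\mathcal{H}$; continuity of $\mathcal{T}$ gives $y=\lim_n \mathcal{T}x_n = \mathcal{T}x\in\mathcal{R}(\mathcal{T})$, so $\mathcal{R}(\mathcal{T})$ is closed.

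\emph{Necessity.} Conversely, assume $\mathcal{T}$ is injective with $\mathcal{R}(\mathcal{T})$ closed. Then $\mathcal{R}(\mathcal{T})$, as a closed subspace of the Hilbert space $\mathcal{H}$, is itself a Hilbert space, and the corestriction $\mathcal{T}\colon\mathcal{H}\to\mathcal{R}(\mathcal{T})$ is a bounded linear bijection. By the Bounded Inverse Theorem (a corollary of the Open Mapping Theorem), the inverse $\mathcal{T}^{-1}\colon\mathcal{R}(\mathcal{T})\to\mathcal{H}$ is bounded; write $M=\|\mathcal{T}^{-1}\|>0$. For any $x\in\mathcal{H}$, applying this to $\mathcal{T}x$ yields $\|x\|=\|\mathcal{T}^{-1}(\mathcal{T}x)\|\le M\|\mathcal{T}x\|$, and squaring gives $c\|x\|^2\le\|\mathcal{T}x\|^2$ with $c=1/M^2>0$.

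The essential point — and the only place care is needed — is the necessity direction: completeness of $\mathcal{H}$, together with the fact that $\mathcal{R}(\mathcal{T})$ is itself complete once it is closed, is indispensable for invoking the Open Mapping Theorem, and one must first regard $\mathcal{R}(\mathcal{T})$ as a Hilbert space in its own right before the inverse can be controlled. The sufficiency direction is by contrast a routine Cauchy-sequence argument.
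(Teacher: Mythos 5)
Your proof is correct: the paper states this result only as a cited fact from the reference (Abramovich--Aliprantis) and gives no proof of its own, and your argument — the Cauchy-sequence argument for sufficiency and the Bounded Inverse/Open Mapping Theorem applied to the corestriction $\mathcal{T}\colon\mathcal{H}\to\mathcal{R}(\mathcal{T})$ for necessity — is exactly the standard proof one finds in such a source. No gaps; the only (harmless) edge case is $\mathcal{H}=\{0\}$, where the inequality holds trivially for any $c>0$.
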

\begin{definition}\cite{Limaye} Let $\mathcal{H}$ be a Hilbert space, and suppose that $\mathcal{T} \in \mathcal{B}(\mathcal{H})$ has a closed range. Then there exists an operator $\mathcal{T}^{+} \in \mathcal{B}(\mathcal{H})$ for which
$$
N\left(\mathcal{T}^{+}\right)=\mathcal{R}(\mathcal{T})^{\perp}, \quad R\left(\mathcal{T}^{+}\right)=N(\mathcal{T})^{\perp}, \quad \mathcal{T} \mathcal{T}^{+} x=x, \quad x \in \mathcal{R}(\mathcal{T}) .
$$

We call the operator $\mathcal{T}^{+}$ the pseudo-inverse of $\mathcal{T}$. This operator is uniquely determined by these properties. In fact, if $\mathcal{T}$ is invertible, then we have $\mathcal{T}^{-1}=\mathcal{T}^{+}$.
\end{definition}

\begin{theorem}\cite{Douglas} \label{Doglas th} 
 Let $\mathcal{H}$ be a Hilbert space and $\mathcal{T}_{1},\mathcal{T}_{2}\in\mathcal{B}(\mathcal{H})$. The following statements are equivalent:
 \begin{enumerate}
 \item $\mathcal{R}(\mathcal{T}_{1})\subset\mathcal{R}(\mathcal{T}_{2})$
 \item  $\mathcal{T}_{1} \mathcal{T}_{1}^{\ast} \leq \lambda^2 \mathcal{T}_{2}\mathcal{T}_{2}^{\ast}$ for some $\lambda \geq 0$;
 \item $\mathcal{T}_{1} = \mathcal{T}_{2} U$ for some $U\in\mathcal{B}(\mathcal{H})$.
 
 \end{enumerate}
\end{theorem}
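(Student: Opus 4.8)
The plan is to prove the three statements equivalent by establishing the chain $(3)\Rightarrow(1)$, $(3)\Rightarrow(2)$, $(2)\Rightarrow(3)$ and $(1)\Rightarrow(3)$, so that $(1)$, $(2)$ and $(3)$ become mutually equivalent. The first two implications are the routine ones. If $(3)$ holds, say $\mathcal{T}_{1}=\mathcal{T}_{2}U$ with $U\in\mathcal{B}(\mathcal{H})$, then any $y\in\mathcal{R}(\mathcal{T}_{1})$ is of the form $y=\mathcal{T}_{1}x=\mathcal{T}_{2}(Ux)\in\mathcal{R}(\mathcal{T}_{2})$, which is $(1)$; and since $UU^{\ast}\leq\|U\|^{2}I$, conjugating this inequality by $\mathcal{T}_{2}$ gives $\mathcal{T}_{1}\mathcal{T}_{1}^{\ast}=\mathcal{T}_{2}UU^{\ast}\mathcal{T}_{2}^{\ast}\leq\|U\|^{2}\,\mathcal{T}_{2}\mathcal{T}_{2}^{\ast}$, so $(2)$ holds with $\lambda=\|U\|$.

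For $(2)\Rightarrow(3)$ I would first rewrite the operator inequality $\mathcal{T}_{1}\mathcal{T}_{1}^{\ast}\leq\lambda^{2}\mathcal{T}_{2}\mathcal{T}_{2}^{\ast}$ in the pointwise form $\|\mathcal{T}_{1}^{\ast}x\|\leq\lambda\,\|\mathcal{T}_{2}^{\ast}x\|$ for all $x\in\mathcal{H}$. This estimate shows that the assignment $\mathcal{T}_{2}^{\ast}x\mapsto\mathcal{T}_{1}^{\ast}x$ is unambiguous and linear on $\mathcal{R}(\mathcal{T}_{2}^{\ast})$ and has norm at most $\lambda$ there; it therefore extends by continuity to a bounded operator on $\overline{\mathcal{R}(\mathcal{T}_{2}^{\ast})}$, and we declare it to be $0$ on $\mathcal{R}(\mathcal{T}_{2}^{\ast})^{\perp}=\mathcal{N}(\mathcal{T}_{2})$. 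Denoting the resulting operator by $V\in\mathcal{B}(\mathcal{H})$, we obtain $V\mathcal{T}_{2}^{\ast}=\mathcal{T}_{1}^{\ast}$, hence $\mathcal{T}_{2}V^{\ast}=\mathcal{T}_{1}$, and $U:=V^{\ast}$ satisfies $(3)$ (when $\lambda=0$ one simply takes $U=0$).

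The genuinely delicate step is $(1)\Rightarrow(3)$. Assuming $\mathcal{R}(\mathcal{T}_{1})\subset\mathcal{R}(\mathcal{T}_{2})$, note that $\mathcal{T}_{2}$ restricts to a bijection from $\mathcal{N}(\mathcal{T}_{2})^{\perp}$ onto $\mathcal{R}(\mathcal{T}_{2})$, so for each $x\in\mathcal{H}$ there is a unique vector $Ux\in\mathcal{N}(\mathcal{T}_{2})^{\perp}$ with $\mathcal{T}_{2}(Ux)=\mathcal{T}_{1}x$; uniqueness makes $U$ linear and gives $\mathcal{T}_{1}=\mathcal{T}_{2}U$ at once. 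The point requiring care is the boundedness of $U$: since $\mathcal{T}_{2}$ is not assumed to have closed range, one cannot write $U=\mathcal{T}_{2}^{+}\mathcal{T}_{1}$ via the pseudo-inverse and read off a norm bound, so I would instead invoke the closed graph theorem. If $x_{n}\to x$ and $Ux_{n}\to y$, then $\mathcal{T}_{1}x_{n}=\mathcal{T}_{2}(Ux_{n})\to\mathcal{T}_{2}y$ while at the same time $\mathcal{T}_{1}x_{n}\to\mathcal{T}_{1}x$, so $\mathcal{T}_{2}y=\mathcal{T}_{1}x$; as $\mathcal{N}(\mathcal{T}_{2})^{\perp}$ is closed, $y\in\mathcal{N}(\mathcal{T}_{2})^{\perp}$, and uniqueness forces $y=Ux$. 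Hence the graph of $U$ is closed, $U$ is bounded, and the proof is complete. I expect this closed-graph argument, rather than any single computation, to be the main obstacle, precisely because the pseudo-inverse machinery of the preliminaries only applies to closed-range operators.
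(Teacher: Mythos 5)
Your proof is correct. Note that the paper itself offers no proof of this statement: it is quoted as a known result from the cited reference (Douglas, 1966), so there is nothing in the paper to compare against. Your argument is essentially Douglas' original one — the chain $(3)\Rightarrow(1)$, $(3)\Rightarrow(2)$, $(2)\Rightarrow(3)$ via the densely defined map $\mathcal{T}_{2}^{\ast}x\mapsto\mathcal{T}_{1}^{\ast}x$ (well-defined and bounded by the pointwise estimate $\|\mathcal{T}_{1}^{\ast}x\|\leq\lambda\|\mathcal{T}_{2}^{\ast}x\|$, extended by continuity and by $0$ on $\mathcal{N}(\mathcal{T}_{2})=\mathcal{R}(\mathcal{T}_{2}^{\ast})^{\perp}$), and $(1)\Rightarrow(3)$ via the unique solution $Ux\in\mathcal{N}(\mathcal{T}_{2})^{\perp}$ of $\mathcal{T}_{2}(Ux)=\mathcal{T}_{1}x$ together with the closed graph theorem — and you are right that the closed-graph step is the crux, since $\mathcal{T}_{2}$ need not have closed range and the pseudo-inverse from the paper's preliminaries is unavailable.
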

\begin{lemma}\cite{Gaz} \label{Lemma2.5}
Let $\mathcal{T}: \mathcal{H} \rightarrow \mathcal{H}$ be a linear operator, and assume that there exist constants $\alpha, \beta \in[0 ; 1)$ such that
$$
\|\mathcal{T} x-x\| \leq \alpha\|x\|+\beta\|\mathcal{T} x\|, \forall x \in \mathcal{H} .
$$

Then $\mathcal{T} \in \mathcal{B}(\mathcal{H})$, and
$$
\frac{1-\alpha}{1+\beta}\|x\| \leq\|\mathcal{T} x\| \leq \frac{1+\alpha}{1-\beta}\|x\|,\quad \frac{1-\beta}{1+\alpha}\|x\| \leq\left\|\mathcal{T}^{-1} x\right\| \leq \frac{1+\beta}{1-\alpha}\|x\|, \forall x \in \mathcal{H} .
$$
\end{lemma}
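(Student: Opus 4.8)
The plan is to extract boundedness and the two-sided norm estimate for $\mathcal{T}$ from the triangle inequality, then to promote ``bounded below'' to ``invertible'' by a homotopy/connectedness argument, and finally to read off the estimates for $\mathcal{T}^{-1}$ by feeding $\mathcal{T}^{-1}y$ back into the hypothesis. First, from $\|\mathcal{T}x\|\le\|\mathcal{T}x-x\|+\|x\|\le(1+\alpha)\|x\|+\beta\|\mathcal{T}x\|$ together with $\beta<1$ I would obtain $\|\mathcal{T}x\|\le\frac{1+\alpha}{1-\beta}\|x\|$, which already gives $\mathcal{T}\in\mathcal{B}(\mathcal{H})$ and the stated upper bound. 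Symmetrically, $\|x\|\le\|\mathcal{T}x-x\|+\|\mathcal{T}x\|\le\alpha\|x\|+(1+\beta)\|\mathcal{T}x\|$ and $\alpha<1$ give $\frac{1-\alpha}{1+\beta}\|x\|\le\|\mathcal{T}x\|$; hence, by the criterion \cite{Abramovich} recalled above, $\mathcal{T}$ is injective with closed range.

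The crux is surjectivity, and the obstruction is that the crude bound $\|\mathrm{Id}-\mathcal{T}\|\le\frac{\alpha+\beta}{1-\beta}$ coming from the two inequalities above need not be $<1$, so a Neumann-series argument is not available. Instead I would interpolate: for $t\in[0,1]$ set $\mathcal{T}_t=(1-t)\,\mathrm{Id}+t\mathcal{T}$. Since $\mathcal{T}_tx-x=t(\mathcal{T}x-x)$ and $t\mathcal{T}x=\mathcal{T}_tx-(1-t)x$, the hypothesis transfers to $\mathcal{T}_t$ in the form $\|\mathcal{T}_tx-x\|\le\big(t\alpha+(1-t)\beta\big)\|x\|+\beta\|\mathcal{T}_tx\|$, and here the constants $t\alpha+(1-t)\beta\le\max(\alpha,\beta)$ and $\beta$ lie in $[0,1)$ uniformly in $t$. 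Applying the first two steps to each $\mathcal{T}_t$ then yields the $t$-independent lower bound $\|\mathcal{T}_tx\|\ge\frac{1-\max(\alpha,\beta)}{1+\beta}\|x\|$. Now put $S=\{t\in[0,1]:\mathcal{T}_t\in\mathrm{GL}(\mathcal{H})\}$. It contains $0$; it is open because the invertible operators form an open set and $t\mapsto\mathcal{T}_t$ is norm-continuous; and it is closed, since if $t_n\to t$ with $t_n\in S$ the uniform lower bound forces $\sup_n\|\mathcal{T}_{t_n}^{-1}\|<\infty$, so for any $y\in\mathcal{H}$ the vectors $x_n=\mathcal{T}_{t_n}^{-1}y$ are bounded and $\mathcal{T}_tx_n=y+(\mathcal{T}_t-\mathcal{T}_{t_n})x_n\to y$; as $\mathcal{R}(\mathcal{T}_t)$ is closed this gives $y\in\mathcal{R}(\mathcal{T}_t)$, so $\mathcal{T}_t$ is onto and, being injective, invertible, i.e. $t\in S$. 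Connectedness of $[0,1]$ then gives $S=[0,1]$, so $\mathcal{T}=\mathcal{T}_1$ is invertible.

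Finally, replacing $x$ by $\mathcal{T}^{-1}y$ in the hypothesis gives $\|\mathcal{T}^{-1}y-y\|\le\beta\|y\|+\alpha\|\mathcal{T}^{-1}y\|$, which is the original inequality with $\alpha$ and $\beta$ interchanged; applying the first two steps to $\mathcal{T}^{-1}$ produces $\frac{1-\beta}{1+\alpha}\|y\|\le\|\mathcal{T}^{-1}y\|\le\frac{1+\beta}{1-\alpha}\|y\|$, which completes the proof. The only genuine difficulty is the surjectivity in the crux step; the idea that makes it work is that the hypothesis is preserved, with uniformly controlled constants, along the whole segment joining $\mathcal{T}$ to $\mathrm{Id}$, so invertibility propagates by a clopen-set argument rather than by a contraction estimate.
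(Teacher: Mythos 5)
You should note at the outset that the paper does not prove this lemma at all: it is quoted verbatim from the reference \cite{Gaz} as a preliminary tool, so there is no in-paper argument to compare yours against. Judged on its own, your proposal is a complete and correct proof. The two norm estimates for $\mathcal{T}$ (hence boundedness, injectivity and closed range via the Abramovich criterion) are obtained exactly as in the classical Hilding/Casazza--Christensen treatment, and you correctly identify the one genuinely nontrivial point, surjectivity, together with the reason a naive Neumann-series bound fails ($\|\mathcal{I}-\mathcal{T}\|\leq\frac{\alpha+\beta}{1-\beta}$ need not be $<1$). Your method-of-continuity step is sound: the transferred inequality
$$
\|\mathcal{T}_t x-x\|\leq\bigl(t\alpha+(1-t)\beta\bigr)\|x\|+\beta\|\mathcal{T}_t x\|
$$
is verified by the identity $t\mathcal{T}x=\mathcal{T}_t x-(1-t)x$, it yields the $t$-uniform lower bound $\|\mathcal{T}_t x\|\geq\frac{1-\max(\alpha,\beta)}{1+\beta}\|x\|$, and the clopen argument (openness from openness of $\mathrm{GL}(\mathcal{H})$ and Lipschitz continuity of $t\mapsto\mathcal{T}_t$, closedness from the uniform bound on $\|\mathcal{T}_{t_n}^{-1}\|$ plus closedness of $\mathcal{R}(\mathcal{T}_t)$) is airtight. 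The final substitution $x=\mathcal{T}^{-1}y$, which turns the hypothesis into the same inequality with $\alpha$ and $\beta$ interchanged, gives precisely the stated bounds for $\mathcal{T}^{-1}$. Two small remarks: nothing in your argument except the citation of the Abramovich criterion is Hilbert-specific (bounded-below directly gives injectivity and closed range), so your proof in fact establishes the Banach-space version of the lemma as in the original source; and your continuation argument could be stated marginally more simply by observing that each $\mathcal{T}_t$ is bounded below with a uniform constant, so invertibility is preserved under norm perturbations of size less than that constant, which covers both the openness and closedness steps at once.
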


\section{$K$-bi-$g$-frames in Hilbert spaces}\label{s3}
In this section, we introduce the concept of a $K$-bi-$g$-frame and subsequently establish some of its properties. But first we give the definition of bi-$g$-frame in Hilbert spaces. Throughout the rest of this part (sections \ref{s3},\ref{s4} and \ref{s5}), we denote: $$(\Phi, \Psi)_{K}=\left(\left\{\Phi_i\;:\;\Phi_i\in\mathcal{B}(\mathcal{H},\mathcal{K}_i)\right\}_{i\in I},\left\{\Psi_i\;:\;\Psi_i\in\mathcal{B}(\mathcal{H},\mathcal{K}_i)\right\}_{i\in I}\right)$$

\begin{definition}\label{bi-g-frame}
A pair $(\Phi, \Psi)_{K}$ of sequences is called a bi-$g$-frame for $\mathcal{H}$ with respect to $\left\{\mathcal{K}_i\right\}_{\in I} $, if there exist two constants $0<A \leq B<\infty$ such that
$$
A\left\| x\right\|^2 \leq \sum_{i\in I}\langle \Phi_i x,\Psi_i x \rangle \leq B\|x\|^2, \text { for all } x \in \mathcal{H} .
$$
\end{definition}

\begin{definition}\label{K-bi-g-frame}
Let $K \in \mathcal{B}(\mathcal{H})$. A pair $(\Phi, \Psi)_{K}$ of sequences is called a $K$-bi-$g$-frame for $\mathcal{H}$  with respect to $\left\{\mathcal{K}_i\right\}_{\in I} $ , if there exist two constants $0<A \leq B<\infty$ such that
$$
A\left\|K^{\ast} x\right\|^2 \leq \sum_{i\in I}\langle \Phi_i x,\Psi_i x \rangle \leq B\|x\|^2, \text { for all } x \in \mathcal{H} .
$$
\end{definition}
The numbers $A$ and $B$ are called respectively the lower and upper bounds for the $K$-bi-$g$-frames $(\Phi, \Psi)_{K}$ respectively. 
 If $K$ is equal to $\mathcal{I}_{\mathcal{H}}$, the identity operator on $\mathcal{H}$, then $K$-bi-$g$-frames is bi-$g$-frames.

Let $(\Phi, \Psi)=\left(\left\{\Phi_i\right\}_{i\in I},\left\{\Psi_i\right\}_{i\in I}\right)$ be a bi-$g$-frame for $\mathcal{H}$. We define the bi-$g$-frame operator $ S_{\Phi, \Psi} $ as follows:
$$
S_{\Phi, \Psi}: \mathcal{H} \longrightarrow \mathcal{H}, \quad S_{\Phi, \Psi}(x):=\sum_{i\in I} \Psi_i^{\ast} \Phi_i x .
$$ 

\begin{remark}
According to Definition \ref{K-bi-g-frame}, the following statements are true for a sequence $\Phi=\left\{\Phi_i\;:\;\Phi_i\in\mathcal{B}(\mathcal{H},\mathcal{K}_i)\right\}_{i\in I}$ for $\mathcal{H}$ with respect to $\left\{\mathcal{K}_i\right\}_{\in I} $:
\begin{enumerate}
\item If $(\Phi, \Phi)$ is a $K$-bi-$g$-frames for $\mathcal{H}$, then $\Phi$ is a $K$-$g$-frame for $\mathcal{H}$.
\item If $(\Phi, C \Phi)$ is a $K$-bi-$g$-frames for some $C \in \mathrm{GL}(\mathcal{H})$, then $\Phi$ is a $C$-controlled $K$-$g$-frame for $\mathcal{H}$.
\item If $(C_{1} \Phi, C_{2} \Phi)$ is a $K$-bi-$g$-frames for some $C_{1}$ and $C_{2}$ in $\mathrm{GL}(\mathcal{H})$, then $\Phi$ is a $(C_{1}, C_{2})$-controlled $K$-$g$-frame for $\mathcal{H}$.
\end{enumerate}
\end{remark}
\begin{example} Let $\mathcal{H}=\mathbb{C}^4$ and $\left\{e_1, e_2, e_3, e_4\right\}$ be an orthonormal basis for $\mathcal{H}$,  and $\mathcal{K}_1=\mathcal{K}_2=\overline{\operatorname{span}}\left\{e_1\right\},\mathcal{K}_3=\overline{\operatorname{span}}\left\{e_3\right\}, \mathcal{K}_4=\overline{\operatorname{span}}\left\{e_4\right\}$. Define $$K: \mathcal{H} \rightarrow \mathcal{H}\quad \text {by} \quad K x  =\left\langle x, e_1\right\rangle e_2.$$ 

 We consider two sequences $\Phi=\left\{\Phi_i\right\}_{i=1}^{4}$ and $\Psi=\left\{\Psi_i\right\}_{i=1}^{4}$ defined as follows: 
 $$
\begin{aligned}
\Phi_1: \mathcal{H} \rightarrow \mathcal{K}_1, \Phi_1 x & =\left\langle x, e_1\right\rangle e_1, \quad x \in \mathcal{H}, \\
\Phi_2: \mathcal{H} \rightarrow \mathcal{K}_2, \Phi_2 x & =\left\langle x, e_1\right\rangle e_1, \quad x \in \mathcal{H}, \\
\Phi_3: \mathcal{H} \rightarrow \mathcal{K}_3, \Phi_3 x & =3\left\langle x, e_2\right\rangle e_3, \quad x \in \mathcal{H} \\
\Phi_4: \mathcal{H} \rightarrow \mathcal{K}_4, \Phi_4 x & =4\left\langle x, e_3\right\rangle e_4, \quad x \in \mathcal{H} .
\end{aligned}
$$
 And
   $$
\begin{aligned}
\Psi_1: \mathcal{H} \rightarrow \mathcal{K}_1, \Psi_1 x & =\left\langle x, e_1\right\rangle e_1, \quad x \in \mathcal{H}, \\
\Psi_2: \mathcal{H} \rightarrow \mathcal{K}_2, \Psi_2 x & =\left\langle x, e_1\right\rangle e_1, \quad x \in \mathcal{H}, \\
\Psi_3: \mathcal{H} \rightarrow \mathcal{K}_3, \Psi_3 x & =\dfrac{1}{3}\left\langle x, e_2\right\rangle e_3, \quad x \in \mathcal{H}\\
\Psi_4: \mathcal{H} \rightarrow \mathcal{K}_4,\Psi_4 x & =\dfrac{1}{4}\left\langle x, e_3\right\rangle e_4, \quad x \in \mathcal{H} .
\end{aligned}
$$
Next, we establish that $K^* x=\left\langle x, e_2\right\rangle e_1, x \in \mathcal{H}$. Indeed, for any  $x, y \in \mathcal{H}$, we obtain:
$$
\begin{aligned}
\left\langle K^* x, y\right\rangle & =\langle x, K m\rangle \\ &=\left\langle x,\left\langle y, e_1\right\rangle e_2\right\rangle \\ &=\left\langle x, e_2\right\rangle \overline{\left\langle y, e_1\right\rangle} \\
& =\left\langle x, e_2\right\rangle\left\langle e_1, y\right\rangle \\ &=\left\langle\left\langle x, e_2\right\rangle e_1, y\right\rangle
\end{aligned}
$$
 For $x\in \mathcal{H}$ we have,
  \begin{align*}
\sum_{i=1}^{4}\langle \Phi_i x,\Psi_i x \rangle&=2\left|\left\langle x, e_1\right\rangle\right|^2+\left|\left\langle x, e_2\right\rangle\right|^2+ \left|\left\langle x, e_3\right\rangle\right|^2
\end{align*}
Hence, for every $x \in \mathcal{H}$, we have
$$
\begin{aligned}
\left\|K^* x\right\|^2 & =\left\|\left\langle x, e_2\right\rangle e_1\right\|^2 \\ &=\left|\left\langle x, e_2\right\rangle\right|^2 \\
& \leq 2\left|\left\langle x, e_1\right\rangle\right|^2+\left|\left\langle x, e_2\right\rangle\right|^2+ \left|\left\langle x, e_3\right\rangle\right|^2 \\ &=\sum_{i=1}^{4}\langle \Phi_i x,\Psi_i x \rangle \\ &\leq 2\|x\|^2 .
\end{aligned}
$$
Therefore $(\Phi, \Psi)_{K}$ is $K$-bi-$g$-frames with bounds $1$ and $2$
\end{example}
\begin{definition}\label{Def Tight}
Let $K \in \mathcal{B}(\mathcal{H})$. A pair  $(\Phi, \Psi)_{K}$ of sequences in $\mathcal{H}$ is said to be a tight $K$-bi-$g$-frames with bound $A$ if
$$
A\left\|K^* x\right\|^2=\sum_{i\in I}\langle \Phi_i x,\Psi_i x \rangle, \text { for all } x \in \mathcal{H} .
$$

When $A=1$, it is called a Parseval $K$-bi-$g$-frames.
\end{definition}
\begin{example}
Let $\mathcal{H}=\mathbb{C}^4$ and $\left\{e_1, e_2, e_3, e_4\right\}$ be an orthonormal basis for $\mathcal{H}$,  and $\mathcal{K}_1=\overline{\operatorname{span}}\left\{e_1\right\},\mathcal{K}_2=\overline{\operatorname{span}}\left\{e_2\right\},\mathcal{K}_3=\overline{\operatorname{span}}\left\{e_3\right\}, \mathcal{K}_4=\overline{\operatorname{span}}\left\{e_4\right\}$. Define $$K: \mathcal{H} \rightarrow \mathcal{H}\quad \text {by} \quad K x  =\left\langle x, e_1\right\rangle e_1+\left\langle x, e_2\right\rangle e_2+\left\langle x, e_3\right\rangle e_3+\left\langle x, e_4\right\rangle e_4.$$ 

 We consider two sequences $\Phi=\left\{\Phi_i\right\}_{i=1}^{4}$ and $\Psi=\left\{\Psi_i\right\}_{i=1}^{4}$ defined as follows: 
 $$
\begin{aligned}
\Phi_1: \mathcal{H} \rightarrow \mathcal{K}_1, \Phi_1 x & =\left\langle x, e_1\right\rangle e_1, \quad x \in \mathcal{H}, \\
\Phi_2: \mathcal{H} \rightarrow \mathcal{K}_2, \Phi_2 x & =2\left\langle x, e_2\right\rangle e_2, \quad x \in \mathcal{H}, \\
\Phi_3: \mathcal{H} \rightarrow \mathcal{K}_3, \Phi_3 x & =3\left\langle x, e_3\right\rangle e_3, \quad x \in \mathcal{H} \\
\Phi_4: \mathcal{H} \rightarrow \mathcal{K}_4, \Phi_4 x & =4\left\langle x, e_4\right\rangle e_4, \quad x \in \mathcal{H} .
\end{aligned}
$$
 And
   $$
\begin{aligned}
\Psi_1: \mathcal{H} \rightarrow \mathcal{K}_1, \Psi_1 x & =\left\langle x, e_1\right\rangle e_1, \quad x \in \mathcal{H}, \\
\Psi_2: \mathcal{H} \rightarrow \mathcal{K}_2, \Psi_2 x & =\dfrac{1}{2}\left\langle x, e_2\right\rangle e_2, \quad x \in \mathcal{H}, \\
\Psi_3: \mathcal{H} \rightarrow \mathcal{K}_3, \Psi_3 x & =\dfrac{1}{3}\left\langle x, e_3\right\rangle e_3, \quad x \in \mathcal{H}\\
\Psi_4: \mathcal{H} \rightarrow \mathcal{K}_4,\Psi_4 x & =\dfrac{1}{4}\left\langle x, e_4\right\rangle e_4, \quad x \in \mathcal{H} .
\end{aligned}
$$ 
For $x\in \mathcal{H}$ we have,
$$K^* x=\left\langle x, e_1\right\rangle e_1 + \left\langle x, e_2\right\rangle e_2 +\left\langle x, e_3\right\rangle e_3+ \left\langle x, e_4\right\rangle e_4, x \in \mathcal{H}.$$ Indeed, for any  $x, y \in \mathcal{H}$, we obtain:
$$
\begin{aligned}
\left\langle K^* x, y\right\rangle & =\langle x, K m\rangle \\ &=\left\langle x,\left\langle y, e_1\right\rangle e_1+\left\langle y, e_2\right\rangle e_2+\left\langle y, e_3\right\rangle e_3+\left\langle y, e_4\right\rangle e_4)\right\rangle \\ &=\left\langle x,\left\langle y, e_1\right\rangle e_1\right\rangle+\left\langle x,\left\langle y, e_2\right\rangle e_2\right\rangle+\left\langle x,\left\langle y, e_3\right\rangle e_3\right\rangle+\left\langle x,\left\langle y, e_4\right\rangle e_4\right\rangle \\ &=\left\langle x, e_1\right\rangle \overline{\left\langle y, e_1\right\rangle}+\left\langle x, e_2\right\rangle \overline{\left\langle y, e_2\right\rangle}+\left\langle x, e_3\right\rangle \overline{\left\langle y, e_3\right\rangle}+\left\langle x, e_4\right\rangle \overline{\left\langle y, e_4\right\rangle} \\
& =\left\langle x, e_1\right\rangle\left\langle e_1, y\right\rangle +\left\langle x, e_2\right\rangle\left\langle e_2, y\right\rangle+\left\langle x, e_3\right\rangle\left\langle e_3, y\right\rangle+\left\langle x, e_4\right\rangle\left\langle e_4, y\right\rangle \\ &=\left\langle\left\langle x, e_1\right\rangle e_1 + \left\langle x, e_2\right\rangle e_2 +\left\langle x, e_3\right\rangle e_3+ \left\langle x, e_4\right\rangle e_4, y\right\rangle
\end{aligned}
$$
Also for $x\in \mathcal{H}$ we have,
$$
\begin{aligned}
\left\|K^* x\right\|^2 & =\left\|\left\langle x, e_1\right\rangle e_1 + \left\langle x, e_2\right\rangle e_2 +\left\langle x, e_3\right\rangle e_3+ \left\langle x, e_4\right\rangle e_4\right\|^2 \\ &=\left|\left\langle x, e_1\right\rangle\right|^2 +\left|\left\langle x, e_2\right\rangle\right|^2+\left|\left\langle x, e_3\right\rangle\right|^2+\left|\left\langle x, e_4\right\rangle\right|^2 \\ &=\sum_{i=1}^{4}\langle \Phi_i x,\Psi_i x \rangle.
\end{aligned}
$$
Therefore $(\Phi, \Psi)_{K}$ is a Parseval $K$-bi-$g$-frames for $\mathcal{H}$.
\end{example}

\begin{theorem}
  $(\Phi, \Psi)_{K}$ is a $K$-bi-$g$-frames if and only if $(\Psi, \Phi)_{K}=\left(\left\{\Psi_i\right\}_{i\in I},\left\{\Phi_i\right\}_{i\in I}\right)$ is a $K$-bi-$g$-frames.
\end{theorem}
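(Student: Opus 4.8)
The statement is symmetric in $\Phi$ and $\Psi$, so it suffices to prove one implication; assume $(\Phi,\Psi)_{K}$ is a $K$-bi-$g$-frame with bounds $A,B$. The plan is to show that for every $x\in\mathcal H$ the scalar $\sum_{i\in I}\langle\Phi_i x,\Psi_i x\rangle$ is real and equals $\sum_{i\in I}\langle\Psi_i x,\Phi_i x\rangle$, after which the defining inequalities for $(\Psi,\Phi)_{K}$ follow verbatim with the same bounds $A$ and $B$.

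The key observation is that the middle term in
$$A\|K^{\ast}x\|^2\le\sum_{i\in I}\langle\Phi_i x,\Psi_i x\rangle\le B\|x\|^2$$
is trapped between two real numbers, hence is itself a real number for each $x\in\mathcal H$ (in particular the series converges, which is part of what Definition \ref{K-bi-g-frame} asserts). Being real, it coincides with its own complex conjugate:
$$\sum_{i\in I}\langle\Phi_i x,\Psi_i x\rangle=\overline{\sum_{i\in I}\langle\Phi_i x,\Psi_i x\rangle}=\sum_{i\in I}\overline{\langle\Phi_i x,\Psi_i x\rangle}=\sum_{i\in I}\langle\Psi_i x,\Phi_i x\rangle,$$
where in the last step I use conjugate symmetry of the inner product on each $\mathcal K_i$, together with continuity of $z\mapsto\bar z$ to move conjugation inside the convergent sum.

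Substituting this identity back into the displayed inequality yields $A\|K^{\ast}x\|^2\le\sum_{i\in I}\langle\Psi_i x,\Phi_i x\rangle\le B\|x\|^2$ for all $x\in\mathcal H$, which is precisely the statement that $(\Psi,\Phi)_{K}$ is a $K$-bi-$g$-frame with lower bound $A$ and upper bound $B$. The reverse implication follows by exchanging the roles of $\Phi$ and $\Psi$. There is no genuine obstacle here; the only point requiring care is that it is the realness of the frame sum — forced by the two-sided estimate — that licenses replacing it with its conjugate, and that interchanging conjugation with the infinite sum is justified by continuity.
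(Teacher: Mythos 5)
Your proof is correct and follows essentially the same route as the paper: conjugate the sum termwise using conjugate symmetry, concluding that the reversed pair satisfies the same bounds $A$ and $B$. Your explicit remark that the sum is real because it is sandwiched between real quantities makes precise a point the paper leaves implicit, but the argument is the same.
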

\begin{proof}
Let $(\Phi, \Psi)_{K}$ be a $K$-bi-$g$-frames with bounds $A$ and $B$. Then, for every $x \in \mathcal{H}$,
$$
A\left\|K^{\ast} x\right\|^2\leq \sum_{i\in I}\langle \Phi_i x,\Psi_i x \rangle \leq B\left\| x\right\|^2.
$$

Now, we can write
$$
\begin{aligned}
\sum_{i\in I}\langle \Phi_i x,\Psi_i x \rangle&=\overline{\sum_{i\in I}\langle \Phi_i x,\Psi_i x \rangle}\\
&=\sum_{i\in I}\overline{\langle \Phi_i x,\Psi_i x \rangle}\\ &=\sum_{i\in I}\langle  \Psi_ix,\Phi_i x \rangle .
\end{aligned}
$$
Therefore
$$
A\left\|K^{\ast} x\right\|^2 \leq \sum_{i\in I}\langle  \Psi_ix,\Phi_i x \rangle \leq B\left\| x\right\|^2 .
$$

This implies that, $(\Psi, \Phi)_{K}$ is a $K$-bi-$g$-frames with bounds $A$ and $B$. The reverse of this statement can be proved similarly..
\end{proof}

\begin{theorem}
Let $K_1, K_2 \in \mathcal{B}(\mathcal{H})$. If  $(\Phi, \Psi)_{K}$ is an K$_{j}$-bi-$g$-frame for $j\in\lbrace 1,\;2\rbrace$  and $\alpha_{1}, \alpha_{2}$ are scalars. Then the following holds: 
\begin{enumerate}
\item $(\Phi, \Psi)_{K}$ is  $(\alpha_{1} K_1+\alpha_{2} K_2)$-bi-$g$-frame
\item $(\Phi, \Psi)_{K}$ is  $K_1 K_2$-bi-$g$-frame.
\end{enumerate}
\end{theorem}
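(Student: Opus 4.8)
The plan is to observe first that in both assertions the upper frame bound requires no work: the right-hand inequality in Definition~\ref{K-bi-g-frame} reads $\sum_{i\in I}\langle \Phi_i x,\Psi_i x\rangle \le B\|x\|^2$ and does not involve the operator at all, so the bound $B$ coming from either hypothesis is automatically an upper bound for the new pair. Hence the whole task reduces to producing a positive lower bound. I would write $A_j,B_j$ for a pair of bounds witnessing that $(\Phi,\Psi)_K$ is a $K_j$-bi-$g$-frame, so that for every $x\in\mathcal{H}$
$$
A_j\|K_j^{*}x\|^2 \le \sum_{i\in I}\langle \Phi_i x,\Psi_i x\rangle \le B_j\|x\|^2, \qquad j=1,2 ;
$$
in particular $\sum_{i\in I}\langle \Phi_i x,\Psi_i x\rangle$ is a nonnegative real number and $\|K_j^{*}x\|^2 \le A_j^{-1}\sum_{i\in I}\langle \Phi_i x,\Psi_i x\rangle$.

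For part (1), I would use $(\alpha_1 K_1+\alpha_2 K_2)^{*}=\overline{\alpha_1}K_1^{*}+\overline{\alpha_2}K_2^{*}$, the triangle inequality, and the elementary bound $(a+b)^2\le 2a^2+2b^2$ to get
$$
\|(\alpha_1 K_1+\alpha_2 K_2)^{*}x\|^2 \le 2|\alpha_1|^2\|K_1^{*}x\|^2+2|\alpha_2|^2\|K_2^{*}x\|^2 \le \Bigl(\tfrac{2|\alpha_1|^2}{A_1}+\tfrac{2|\alpha_2|^2}{A_2}\Bigr)\sum_{i\in I}\langle \Phi_i x,\Psi_i x\rangle .
$$
When $(\alpha_1,\alpha_2)\ne(0,0)$ this yields the lower bound $A=\bigl(\tfrac{2|\alpha_1|^2}{A_1}+\tfrac{2|\alpha_2|^2}{A_2}\bigr)^{-1}>0$, which together with the upper bound $B_1$ shows that $(\Phi,\Psi)_K$ is an $(\alpha_1 K_1+\alpha_2 K_2)$-bi-$g$-frame; if $\alpha_1=\alpha_2=0$ then $\alpha_1 K_1+\alpha_2 K_2=0$ and any $0<A\le B_1$ works.

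For part (2), I would use $(K_1 K_2)^{*}=K_2^{*}K_1^{*}$ together with the operator-norm estimate $\|K_2^{*}K_1^{*}x\|\le\|K_2\|\,\|K_1^{*}x\|$, giving
$$
\|(K_1 K_2)^{*}x\|^2 \le \|K_2\|^2\|K_1^{*}x\|^2 \le \frac{\|K_2\|^2}{A_1}\sum_{i\in I}\langle \Phi_i x,\Psi_i x\rangle ,
$$
so that, provided $K_2\ne 0$, the constant $A=A_1\|K_2\|^{-2}>0$ is a lower bound and $B_1$ an upper bound; if $K_2=0$ then $K_1K_2=0$ and any $0<A\le B_1$ does. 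The computations here are routine; the only points that actually need care are noting that $\sum_{i\in I}\langle \Phi_i x,\Psi_i x\rangle\ge 0$ so that the inequalities can legitimately be chained, and handling the degenerate cases in which the new operator is $0$, where the constant produced by the estimate collapses and one must instead read the lower bound off the definition directly.
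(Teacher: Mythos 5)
Your proof is correct and takes essentially the same route as the paper's: reduce everything to the lower bound, estimate $\|(\alpha_1K_1+\alpha_2K_2)^{*}x\|^2$ in terms of $\|K_1^{*}x\|^2$ and $\|K_2^{*}x\|^2$, and estimate $\|(K_1K_2)^{*}x\|^2$ by $\|K_2^{*}\|^2\|K_1^{*}x\|^2$. If anything, yours is slightly more careful: the paper's display omits the factor $2$ from $\|a+b\|^2\le 2\|a\|^2+2\|b\|^2$ (so its stated lower bound $AC/(C|\alpha_1|^2+A|\alpha_2|^2)$ is off by that factor), and it does not treat the degenerate cases $\alpha_1=\alpha_2=0$ or $K_2=0$, which you handle explicitly.
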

\begin{proof}
(1) Let $(\Phi, \Psi)_{K}$
  be $K_{1}$-bi-$g$-frame and $K_{2}$-bi-$g$-frame. Then for $j=1$, there exist two constants $0<A \leq B<\infty$ such that
$$
A\left\|K_1^{\ast} x\right\|^2 \leq \sum_{i\in I}\langle \Phi_i x,\Psi_i x \rangle \leq B\|x\|^2, \text { for all } x \in \mathcal{H} .
$$
And for $j=2$, there exist two constants $0<C \leq D<\infty$ such that
$$
C\left\|K_2^{\ast} x\right\|^2 \leq \sum_{i\in I}\langle \Phi_i x,\Psi_i x \rangle \leq D\|x\|^2, \text { for all } x \in \mathcal{H} .
$$
Now, we can write
$$
\begin{aligned}
\left\|\left(\alpha_{1} K_1+\alpha_{2} K_2\right)^{\ast} x\right\|^2 & \leq|\alpha_{1}|^2\left\|K_1^{\ast} x\right\|^2+|\alpha_{2}|^2\left\|K_2^{\ast} x\right\|^2 \\
& \leq|\alpha_{1}|^2\left(\frac{1}{A} \sum_{i\in I}\langle \Phi_i x,\Psi_i x \rangle\right)+|\alpha_{2}|^2\left(\frac{1}{C} \sum_{i\in I}\langle \Phi_i x,\Psi_i x \rangle\right) \\
& =\left(\frac{|\alpha_{1}|^2}{A}+\frac{|\alpha_{2}|^2}{C}\right) \sum_{i\in I}\langle \Phi_i x,\Psi_i x \rangle .
\end{aligned}
$$

It follows that
$$
\left(\frac{A C}{C|\alpha|^2+A|\alpha_{2}|^2}\right)\left\|\left(\alpha_{1} K_1+\alpha_{2} K_2\right)^{\ast} x\right\|^2 \leq \sum_{i\in I}\langle \Phi_i x,\Psi_i x \rangle .
$$

Hence $(\Phi, \Psi)_{K}$ satisfies the lower frame condition. And we have
$$
\sum_{i\in I}\langle \Phi_i x,\Psi_i x \rangle \leq\min \lbrace B,D\rbrace\|x\|^2, \text { for all } x \in \mathcal{H}
$$
it follows that
$$
\left(\frac{A C}{C|\alpha|^2+A|\alpha_{2}|^2}\right)\left\|\left(\alpha_{1} K_1+\alpha_{2} K_2\right)^{\ast} x\right\|^2\leq\sum_{i\in I}\langle \Phi_i x,\Psi_i x \rangle \leq\min \lbrace B,D\rbrace\|x\|^2, \text { for all } x \in \mathcal{H}
$$
Therefore $(\Phi, \Psi)_{K}$ is  $(\alpha_{1} K_1+\alpha_{2} K_2)$-bi-$g$-frame.

(2) Now for each $x \in \mathcal{H}$, we have
$$
\left\|\left(K_1 K_2\right)^{\ast} x\right\|^2=\left\|K_2^{\ast} K_1^{\ast} x\right\|^2 \leq\left\|K_2^{\ast}\right\|^2\left\|K_1^{\ast} x\right\|^2 .
$$

Since  $(\Phi, \Psi)_{K}$ is $K_1$-bi-$g$-frame, then there exist two constants $0<A \leq B<\infty$ such that
$$
A\left\|K_1^{\ast} x\right\|^2 \leq \sum_{i\in I}\langle \Phi_i x,\Psi_i x \rangle \leq B\|x\|^2, \text { for all } x \in \mathcal{H} .
$$
Therefore
$$
\frac{1}{\left\|K_2^{\ast}\right\|^2} \left\|\left(K_1 K_2\right)^{\ast} x\right\|^2\leq\left\|K_1^{\ast} x\right\|^2 \leq \frac{1}{A} \sum_{i\in I}\langle \Phi_i x,\Psi_i x \rangle \leq \frac{B}{A}\|x\|^2 .
$$

This implies that
$$
\frac{A}{\left\|K_2^{\ast}\right\|^2}\left\|\left(K_1 K_2\right)^{\ast} x\right\|^2 \leq \sum_{i\in I}\langle \Phi_i x,\Psi_i x \rangle \leq B\|x\|^2, \text { for all } x \in \mathcal{H} .
$$

Therefore $(\Phi, \Psi)_{K}$ is $K_1 K_2$-bi-$g$-frame for $\mathcal{H} $.
\end{proof}

\begin{corollary}
 Let $n\in\mathbb{N}\setminus\lbrace  0,1\rbrace$ and $K_i \in \mathcal{B}(\mathcal{H})$ for $j\in  [\![1;n]\!] $. If  $(\Phi, \Psi)_{K}$ is  K$_{j}$-bi-$g$-frame for $j\in  [\![1;n]\!]$  and $\alpha_{1}, \alpha_{2}\cdots, \alpha_{n}$ are non-zero scalars. Then the following holds: 
\begin{enumerate}
\item $(\Phi, \Psi)_{K}$ is  $(\sum\limits_{j=1}^{n}\alpha_{j} K_i)$-bi-$g$-frame
\item $(\Phi, \Psi)_{K}$ is  $(K_1K_2\cdots K_{n})$-bi-$g$-frame.
\end{enumerate}
\end{corollary}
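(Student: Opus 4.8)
The plan is to derive the corollary from the previous theorem by a straightforward induction on $n$, with the theorem itself serving as the base case $n=2$.

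For the inductive step of part (1), suppose the statement holds for any $n-1$ operators. Given $K_1,\dots,K_n$ such that $(\Phi,\Psi)_K$ is a $K_j$-bi-$g$-frame for each $j\in\{1,\dots,n\}$, and given non-zero scalars $\alpha_1,\dots,\alpha_n$, set $L_1=\sum_{j=1}^{n-1}\alpha_j K_j$ and $L_2=K_n$. By the induction hypothesis applied to $K_1,\dots,K_{n-1}$ with scalars $\alpha_1,\dots,\alpha_{n-1}$, the pair $(\Phi,\Psi)_K$ is an $L_1$-bi-$g$-frame; by hypothesis it is also an $L_2$-bi-$g$-frame. Applying part (1) of the theorem to the two operators $L_1,L_2$ with the scalars $1$ and $\alpha_n$ yields that $(\Phi,\Psi)_K$ is a $(1\cdot L_1+\alpha_n L_2)$-bi-$g$-frame, and since $1\cdot L_1+\alpha_n L_2=\sum_{j=1}^{n}\alpha_j K_j$, this completes the step.

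For part (2), again assume the result for $n-1$ operators and write $K_1 K_2\cdots K_n=(K_1\cdots K_{n-1})K_n$. By the induction hypothesis, $(\Phi,\Psi)_K$ is a $(K_1\cdots K_{n-1})$-bi-$g$-frame, and by hypothesis it is a $K_n$-bi-$g$-frame; part (2) of the theorem applied to the operators $K_1\cdots K_{n-1}$ and $K_n$ then gives that $(\Phi,\Psi)_K$ is a $\bigl((K_1\cdots K_{n-1})K_n\bigr)$-bi-$g$-frame, i.e. a $(K_1 K_2\cdots K_n)$-bi-$g$-frame, which closes the induction.

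I do not expect a genuine obstacle here; the only point requiring a moment's care is checking that the hypotheses of the theorem are satisfied at each step. In part (1) the scalars fed into the theorem are $1$ and $\alpha_n$, both non-zero, so the lower bound $\tfrac{AC}{C+A|\alpha_n|^2}$ produced there is well defined; in both parts the upper-bound condition simply passes through the $\min$ appearing in the theorem. All the bound bookkeeping is already carried out inside the theorem, so it need only be invoked rather than repeated.
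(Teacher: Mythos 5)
Your induction is correct, but it is a genuinely different route from the paper's. The paper does not induct on $n$: it repeats the two-operator computation directly for $n$ operators, expanding $\bigl\|\bigl(\sum_{j=1}^{n}\alpha_j K_j\bigr)^{*}x\bigr\|^2$ term by term to get the explicit lower bound $\bigl(\sum_{j=1}^{n}|\alpha_j|^2/A_j\bigr)^{-1}$ together with the upper bound $\min_{j}B_j$, and for the product it bounds $\|(K_1\cdots K_n)^{*}x\|^2\le\|K_n^{*}\cdots K_2^{*}\|^2\|K_1^{*}x\|^2$ and uses only the $K_1$-bound, obtaining $A_1/\|K_n^{*}\cdots K_2^{*}\|^2$ and $B_1$. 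What the paper's direct approach buys is these explicit frame bounds in terms of the original $A_j,B_j$; what your approach buys is economy — you treat the theorem as a black box, so you never rewrite the norm estimates, and the bounds you get are whatever the iterated two-operator formulas produce (well defined, since at each step you feed in the non-zero scalars $1$ and $\alpha_n$, as you note, but less transparent than the paper's closed forms). One incidental remark: the expansion $\|(\alpha_1K_1+\cdots+\alpha_nK_n)^{*}x\|^2\le\sum_j|\alpha_j|^2\|K_j^{*}x\|^2$ used in the paper (and already in the two-operator theorem) silently drops cross terms; your inductive argument inherits exactly the validity of the theorem rather than re-introducing that estimate for general $n$, so relative to the paper you lose nothing in rigor.
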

\begin{proof}
(1) Suppose that $n\in\mathbb{N}\setminus\lbrace  0,1\rbrace$ and for every $j\in  [\![1;n]\!]$, $(\Phi, \Psi)_{K}$ is  K$_{j}$-bi-$g$-frame . Then for each $j\in  [\![1;n]\!]$ there exist positive constants $0<A_{j} \leq B_{j}<\infty$ such that
$$
A_{j}\left\|K_i^{\ast} x\right\|^2 \leq \sum_{i\in I}\langle \Phi_i x,\Psi_i x \rangle \leq B_{j}\|x\|^2, \text { for all } x \in \mathcal{H} .
$$
Now, we can write
$$
\begin{aligned}
\left\|\left(\sum\limits_{j=1}^{n}\alpha_{j} K_i\right)^{\ast} x\right\|^2 & =\left\|\alpha_{1} K_1^{\ast} x+\left(\alpha_{2}K_2+\cdots+\alpha_{n} K_n\right)^{\ast} x\right\|^2 \\
& \leq|\alpha_{1}|^2\left\|K_1^{\ast} x\right\|^2+\left\|\left(\alpha_{2}K_2+\cdots+\alpha_{n} K_n\right)^{\ast} x\right\|^2 \\
& \leq|\alpha_{1}|^2\left\|K_1^{\ast} x\right\|^2+\cdots+|\alpha_{n}|^2\left\|K_n^{\ast} x\right\|^2 \\
& \leq|\alpha_{1}|^2\left(\frac{1}{A_{1}} \sum_{i\in I}\langle \Phi_i x,\Psi_i x \rangle\right)+\cdots+|\alpha_{n}|^2\left(\frac{1}{A_{n}} \sum_{i\in I}\langle \Phi_i x,\Psi_i x \rangle\right) \\
& =\left(\frac{|\alpha_{1}|^2}{A_{1}}+\cdots+\frac{|\alpha_{n}|^2}{A_{n}}\right) \sum_{i\in I}\langle \Phi_i x,\Psi_i x \rangle\\
&= \left(\sum_{j=1}^{n}\frac{|\alpha_{j}|^2}{A_{j}}\right) \sum_{i\in I}\langle \Phi_i x,\Psi_i x \rangle .
\end{aligned}
$$
Hence $(\Phi, \Psi)_{K}$ satisfies the lower frame condition. And we have
$$
\sum_{i\in I}\langle \Phi_i x,\Psi_i x \rangle \leq\min\limits_{j\in  [\![1;n]\!]} \lbrace B_{j}\rbrace\|x\|^2, \text { for all } x \in \mathcal{H}
.$$
It follows that
$$
\left(\sum_{j=1}^{n}\frac{|\alpha_{j}|^2}{A_{j}}\right)^{-1}\left\|\left(\sum\limits_{j=1}^{n}\alpha_{j} K_i\right)^{\ast} x\right\|^2\leq\sum_{i\in I}\langle \Phi_i x,\Psi_i x \rangle \leq\min\limits_{j\in  [\![1;n]\!]} \lbrace B_{j}\rbrace\|x\|^2, \text { for all } x \in \mathcal{H}.
$$
Hence $(\Phi, \Psi)_{K}$ is  $(\sum\limits_{j=1}^{n}\alpha_{j} K_i)$-bi-$g$-frame

(2) Now for each $x \in \mathcal{H}$, we have
$$
\left\|(K_1K_2\cdots K_{n})^{\ast} x\right\|^2=\left\|K_n^{\ast} \cdots K_1^{\ast} x\right\|^2 \leq\left\|K_n^{\ast} \cdots K_2^{\ast}\right\|^2\left\|K_1^{\ast} x\right\|^2 .
$$
Since  $(\Phi, \Psi)_{K}$ is $K_1$-bi-$g$-frame, then there exist two constants $0<A_{1} \leq B_{1}<\infty$ such that 
$$
A_{1}\left\|K_1^{\ast} x\right\|^2 \leq \sum_{i\in I}\langle \Phi_i x,\Psi_i x \rangle \leq B_{1}\|x\|^2, \text { for all } x \in \mathcal{H} .
$$
Therefore
$$
\frac{1}{\left\|K_n^{\ast} \cdots K_2^{\ast}\right\|^2} \left\|(K_1K_2\cdots K_{n})^{\ast} x\right\|^2 \leq \frac{1}{A_{1}} \sum_{i\in I}\langle \Phi_i x,\Psi_i x \rangle \leq \frac{B_{1}}{A_{1}}\|x\|^2 .
$$

This implies that
$$
\frac{A_{1}}{\left\|K_n^{\ast} \cdots K_2^{\ast}\right\|^2}\left\|(K_1K_2\cdots K_{n})^{\ast} x\right\|^2\leq \sum_{i\in I}\langle \Phi_i x,\Psi_i x \rangle \leq B_{1}\|x\|^2, \text { for all } x \in \mathcal{H} .
$$

Therefore $(\Phi, \Psi)_{K}$ is $(K_1K_2\cdots K_{n})$-bi-$g$-frame for $\mathcal{H} $.
\end{proof}
\begin{theorem}
Let $K \in \mathcal{B}(\mathcal{H})$ with $\|K\| \geq 1$. Then every ordinary bi-$g$-frame is a $K$-bi-$g$-frames for $\mathcal{H}$ with respect to $\left\{\mathcal{K}_i\right\}_{\in I} $.
\end{theorem}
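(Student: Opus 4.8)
The plan is to keep the upper bound untouched and to push the operator norm of $K$ into the lower bound. Suppose $(\Phi, \Psi)$ is an ordinary bi-$g$-frame for $\mathcal{H}$ with respect to $\left\{\mathcal{K}_i\right\}_{i\in I}$, so that there are constants $0<A\leq B<\infty$ with
$$
A\|x\|^2 \leq \sum_{i\in I}\langle \Phi_i x,\Psi_i x \rangle \leq B\|x\|^2, \quad x \in \mathcal{H}.
$$
The right-hand inequality is already the desired upper estimate for the $K$-bi-$g$-frame, so nothing needs to be done there.

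For the lower estimate, I would use that $K^{\ast}\in\mathcal{B}(\mathcal{H})$ with $\|K^{\ast}\|=\|K\|$, hence $\|K^{\ast}x\|^2 \leq \|K\|^2\|x\|^2$ for all $x\in\mathcal{H}$, i.e. $\dfrac{1}{\|K\|^2}\|K^{\ast}x\|^2 \leq \|x\|^2$. Combining this with the lower bound of the bi-$g$-frame gives
$$
\frac{A}{\|K\|^2}\left\|K^{\ast}x\right\|^2 \leq A\|x\|^2 \leq \sum_{i\in I}\langle \Phi_i x,\Psi_i x \rangle, \quad x \in \mathcal{H}.
$$
Together with the upper bound this shows that $(\Phi,\Psi)_{K}$ satisfies the defining inequality of Definition \ref{K-bi-g-frame} with candidate constants $\dfrac{A}{\|K\|^2}$ and $B$.

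The only point that needs the hypothesis $\|K\|\geq 1$ is the admissibility of these constants: one must check $0<\dfrac{A}{\|K\|^2}\leq B$. Since $\|K\|\geq 1$ we have $\dfrac{A}{\|K\|^2}\leq A\leq B$, so the pair $\left(\dfrac{A}{\|K\|^2},B\right)$ is a legitimate pair of bounds and $(\Phi,\Psi)_{K}$ is a $K$-bi-$g$-frame for $\mathcal{H}$ with respect to $\left\{\mathcal{K}_i\right\}_{i\in I}$. I expect no real obstacle here; the argument is a one-line norm estimate, and the condition $\|K\|\geq 1$ is exactly what keeps the new lower bound from overtaking the upper bound.
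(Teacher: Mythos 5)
Your argument is correct and is essentially the same as the paper's proof: both keep the upper bound $B$ and obtain the lower bound $\dfrac{A}{\|K\|^{2}}$ from $\|K^{\ast}x\|\leq\|K\|\,\|x\|$, with $\|K\|\geq 1$ ensuring the constants remain admissible. No further comment is needed.
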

\begin{proof}
Suppose that $(\Phi, \Psi)_{K}$
  is bi-$g$-
  frame for $\mathcal{H}$. Then there exist two constants $0<A \leq B<\infty$ such that
$$
A\left\| x\right\|^2 \leq \sum_{i\in I}\langle \Phi_i x,\Psi_i x \rangle \leq B\|x\|^2, \text { for all } x \in \mathcal{H} .
$$
For $K \in \mathcal{B}(\mathcal{H})$, we have 
$$\left\|K^{\ast} x\right\|^{2} \leq\|K\|^{2}\|x\|^{2}, \quad \forall x \in \mathcal{H}.$$ Since $\|K\| \geq 1$, we obtain $$\frac{1}{\|K\|^2}\left\|K^{\ast} x\right\|^2 \leq\|x\|^2\quad \forall x \in \mathcal{H}.$$ Therefore
$$
\frac{A}{\|K\|^2}\left\|K^{\ast} x\right\|^2 \leq A\|x\|^2 \leq \sum_{i\in I}\langle \Phi_i x,\Psi_i x \rangle \leq B\|x\|^2, \text { for all } x \in \mathcal{H} \text {. }
$$

Therefore $(\Phi, \Psi)_{K}$ is a $K$-bi-$g$-frames for $\mathcal{H}$.
\end{proof}
\begin{theorem}\label{Th3.7}
Let $(\Phi, \Psi)_{K}$ be a bi-$g$-frame for $\mathcal{H}$. Then 
$ (\Phi, \Psi)_{K}$ is a $K$-bi-$g$-frames for $\mathcal{H}$ with respect to $\left\{\mathcal{K}_i\right\}_{\in I} $ if and only if there exists $A>0$ such that $S_{\Phi, \Psi} \geq A K K^{\ast}$, where $S_{\Phi, \Psi}$ is the bi-$g$-frame operator for $(\Phi, \Psi)_{K}$.
\end{theorem}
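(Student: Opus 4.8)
The plan is to reduce both conditions appearing in the statement to operator inequalities by means of the identity
$$
\langle S_{\Phi,\Psi}x,x\rangle=\sum_{i\in I}\langle\Phi_i x,\Psi_i x\rangle,\qquad x\in\mathcal{H},
$$
which holds because $\langle S_{\Phi,\Psi}x,x\rangle=\sum_{i\in I}\langle\Psi_i^{\ast}\Phi_i x,x\rangle=\sum_{i\in I}\langle\Phi_i x,\Psi_i x\rangle$. Since $(\Phi,\Psi)_{K}$ is assumed to be a bi-$g$-frame for $\mathcal{H}$, the series defining $S_{\Phi,\Psi}$ converges, $S_{\Phi,\Psi}\in\mathcal{B}(\mathcal{H})$, and $\langle S_{\Phi,\Psi}x,x\rangle=\sum_{i\in I}\langle\Phi_i x,\Psi_i x\rangle\in[A\|x\|^2,B\|x\|^2]\subset\mathbb{R}$ for every $x$; hence $S_{\Phi,\Psi}$ is a positive, self-adjoint operator, so that comparing it with the positive operator $AKK^{\ast}$ is meaningful.

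Next I would record the elementary observation that, for every $x\in\mathcal{H}$ and every $A>0$,
$$
A\|K^{\ast}x\|^2=A\langle KK^{\ast}x,x\rangle=\langle(AKK^{\ast})x,x\rangle .
$$
Combining this with the identity above, the lower $K$-bi-$g$-frame inequality $A\|K^{\ast}x\|^2\le\sum_{i\in I}\langle\Phi_i x,\Psi_i x\rangle$ is seen to be \emph{equivalent}, for a fixed $A>0$, to $\langle(AKK^{\ast})x,x\rangle\le\langle S_{\Phi,\Psi}x,x\rangle$ for all $x$, that is, to $\langle(S_{\Phi,\Psi}-AKK^{\ast})x,x\rangle\ge 0$ for all $x$, which is precisely the operator inequality $S_{\Phi,\Psi}\ge AKK^{\ast}$.

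With this in hand both implications are immediate. For the forward direction, if $(\Phi,\Psi)_{K}$ is a $K$-bi-$g$-frame with lower bound $A$, the displayed equivalence yields $S_{\Phi,\Psi}\ge AKK^{\ast}$. For the converse, if $S_{\Phi,\Psi}\ge AKK^{\ast}$ for some $A>0$, then for every $x\in\mathcal{H}$ we obtain $\sum_{i\in I}\langle\Phi_i x,\Psi_i x\rangle=\langle S_{\Phi,\Psi}x,x\rangle\ge\langle(AKK^{\ast})x,x\rangle=A\|K^{\ast}x\|^2$, which is the lower $K$-bi-$g$-frame bound, while the upper bound $\sum_{i\in I}\langle\Phi_i x,\Psi_i x\rangle\le B\|x\|^2$ is inherited from the standing bi-$g$-frame hypothesis; hence $(\Phi,\Psi)_{K}$ is a $K$-bi-$g$-frame for $\mathcal{H}$.

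I do not expect a genuine obstacle: the entire content is the dictionary between the scalar frame inequalities and the operator inequality via $S_{\Phi,\Psi}$. The only point deserving a word of care is that the relation $S_{\Phi,\Psi}\ge AKK^{\ast}$ presupposes that $S_{\Phi,\Psi}$ is self-adjoint, so that $x\mapsto\langle S_{\Phi,\Psi}x,x\rangle$ is real-valued and the order relation makes sense; as noted above this is forced by the assumption that $(\Phi,\Psi)_{K}$ is already a bi-$g$-frame, since then $\langle S_{\Phi,\Psi}x,x\rangle$ lies in $[A\|x\|^2,B\|x\|^2]$ and a bounded operator on a complex Hilbert space with real quadratic form is self-adjoint.
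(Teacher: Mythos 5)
Your proposal is correct and follows essentially the same route as the paper's proof: both translate the scalar inequalities $A\|K^{\ast}x\|^{2}\le\sum_{i\in I}\langle\Phi_i x,\Psi_i x\rangle\le B\|x\|^{2}$ into the quadratic-form statement $\langle AKK^{\ast}x,x\rangle\le\langle S_{\Phi,\Psi}x,x\rangle$ via the identity $\langle S_{\Phi,\Psi}x,x\rangle=\sum_{i\in I}\langle\Phi_i x,\Psi_i x\rangle$, which is exactly the operator inequality $S_{\Phi,\Psi}\ge AKK^{\ast}$. Your added remark that the bi-$g$-frame hypothesis forces $S_{\Phi,\Psi}$ to be positive and self-adjoint, so that the operator ordering is meaningful, is a worthwhile point of care that the paper leaves implicit.
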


\begin{proof}
 $(\Phi, \Psi)_{K}$ is a $K$-bi-$g$-frames for $\mathcal{H}$ with frame bounds $A, B$ and biframe operator $S_{\Phi, \Psi} $, if and only if
$$
A\left\|K^{\ast} x\right\|^2 \leq \langle S_{\Phi, \Psi} x, x\rangle =\langle  \sum_{i\in I} \Psi_i^{\ast} \Phi_i x, x\rangle =\sum_{i\in I}\langle \Phi_i x,\Psi_i x \rangle\leq B\|x\|^2, \quad \forall x \in \mathcal{H},
$$
that is,
$$
\left\langle A K K^{\ast} x, x\right\rangle \leq\langle S_{\Phi, \Psi} x, x\rangle \leq\langle B x, x\rangle, \quad \forall x \in \mathcal{H} .
$$

So the conclusion holds.
\end{proof}
\begin{corollary}
Let $(\Phi, \Psi)_{K}$ be a bi-$g$-frame for $\mathcal{H}$. Then 
$ (\Phi, \Psi)_{K}$ is a tight $K$-bi-$g$-frames for $\mathcal{H}$ with respect to $\left\{\mathcal{K}_i\right\}_{\in I} $ if and only if there exists $A>0$ such that $S_{\Phi, \Psi} = A K K^{\ast}$, where $S_{\Phi, \Psi}$ is the bi-$g$-frame operator for $(\Phi, \Psi)_{K}$.
\end{corollary}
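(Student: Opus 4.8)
The plan is to obtain this as an immediate specialization of Theorem \ref{Th3.7} by replacing the two operator inequalities there with a single operator equality. First I would unwind Definition \ref{Def Tight}: $(\Phi,\Psi)_K$ is a tight $K$-bi-$g$-frame with bound $A$ precisely when
$$
A\left\|K^{\ast}x\right\|^2=\sum_{i\in I}\langle \Phi_i x,\Psi_i x\rangle,\qquad \forall x\in\mathcal{H}.
$$
Exactly as in the proof of Theorem \ref{Th3.7}, the right-hand side is $\langle S_{\Phi,\Psi}x,x\rangle$, since $\sum_{i\in I}\langle \Phi_i x,\Psi_i x\rangle=\langle\sum_{i\in I}\Psi_i^{\ast}\Phi_i x,x\rangle$, and the left-hand side is $\langle AKK^{\ast}x,x\rangle=A\langle K^{\ast}x,K^{\ast}x\rangle$. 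Hence the tightness condition is equivalent to
$$
\langle S_{\Phi,\Psi}x,x\rangle=\langle AKK^{\ast}x,x\rangle,\qquad \forall x\in\mathcal{H}.
$$

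Next I would pass from this equality of quadratic forms to the operator equality $S_{\Phi,\Psi}=AKK^{\ast}$. Over the complex Hilbert space $\mathcal{H}$, polarization shows that if $\langle Tx,x\rangle=\langle T'x,x\rangle$ for all $x$ then $T=T'$; alternatively one notes that $KK^{\ast}$ is self-adjoint and that $S_{\Phi,\Psi}$ is self-adjoint as well, because $\langle S_{\Phi,\Psi}x,x\rangle=\sum_{i\in I}\langle \Phi_i x,\Psi_i x\rangle$ is real for every $x$ (this is the identity $\sum_{i\in I}\langle \Phi_i x,\Psi_i x\rangle=\sum_{i\in I}\langle \Psi_i x,\Phi_i x\rangle$ already established in the excerpt), so the two self-adjoint operators agreeing on the diagonal must coincide. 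Either way one gets $S_{\Phi,\Psi}=AKK^{\ast}$.

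For the converse I would simply read the chain of equalities backwards: if $S_{\Phi,\Psi}=AKK^{\ast}$ for some $A>0$, then for every $x\in\mathcal{H}$,
$$
\sum_{i\in I}\langle \Phi_i x,\Psi_i x\rangle=\langle S_{\Phi,\Psi}x,x\rangle=\langle AKK^{\ast}x,x\rangle=A\left\|K^{\ast}x\right\|^2,
$$
which is the defining equality of a tight $K$-bi-$g$-frame with bound $A$; combined with the hypothesis that $(\Phi,\Psi)_K$ is an (ordinary) bi-$g$-frame, $A$ lies strictly between $0$ and the upper bound $B$, so all requirements of Definition \ref{Def Tight} are met. I expect no genuine obstacle here: the only point needing a word of justification is the upgrade from equal quadratic forms to equal operators, which is routine over $\mathbb{C}$.
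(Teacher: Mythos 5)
Your proposal is correct and follows the route the paper intends: the paper dismisses this corollary as ``evident'' from Definition \ref{Def Tight} and the computation in Theorem \ref{Th3.7}, i.e.\ replacing the inequalities between the quadratic forms $\langle S_{\Phi,\Psi}x,x\rangle$ and $\langle AKK^{\ast}x,x\rangle$ by an equality. Your write-up simply supplies the one step the paper leaves implicit --- upgrading equality of quadratic forms to the operator identity $S_{\Phi,\Psi}=AKK^{\ast}$ via polarization (or self-adjointness of both operators) over the complex Hilbert space --- which is a legitimate and routine justification, so there is nothing to correct.
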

\begin{proof}
The proof is evident; one can simply utilize the definition of tight $K$-bi-$g$-frames \ref{Def Tight}.
\end{proof}
\begin{theorem}
 Let $(\Phi, \Psi)_{K}$ be a bi-$g$-frame for $\mathcal{H}$, with bi-$g$-frame operator $S_{\Phi, \Psi}$ which satisfies $S_{\Phi, \Psi}^{\frac{1}{2}^{\ast}}=S_{\Phi, \Psi}^{\frac{1}{2}}$. Then $(\Phi, \Psi)_{K}$ is a $K$-bi-$g$-frame for $\mathcal{H}$ with respect to $\left\{\mathcal{K}_i\right\}_{\in I} $ if and only if $K=S_{\Phi, \Psi}^{\frac{1}{2}} U$, for some $U \in \mathcal{B}(\mathcal{H})$.
\end{theorem}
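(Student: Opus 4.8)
The plan is to reduce the statement to Theorem~\ref{Th3.7} and then invoke the Douglas factorization theorem (Theorem~\ref{Doglas th}). Since $(\Phi,\Psi)_{K}$ is a bi-$g$-frame, the inequality $A_{0}\|x\|^{2}\le\langle S_{\Phi,\Psi}x,x\rangle\le B_{0}\|x\|^{2}$ shows that $S_{\Phi,\Psi}$ is a positive (indeed invertible) operator, so its square root $S_{\Phi,\Psi}^{\frac12}$ is well defined, and the hypothesis $S_{\Phi,\Psi}^{\frac12^{\ast}}=S_{\Phi,\Psi}^{\frac12}$ lets us write $S_{\Phi,\Psi}=S_{\Phi,\Psi}^{\frac12}S_{\Phi,\Psi}^{\frac12}=S_{\Phi,\Psi}^{\frac12}\bigl(S_{\Phi,\Psi}^{\frac12}\bigr)^{\ast}$. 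This is exactly the product form needed to match $\mathcal{T}_{2}\mathcal{T}_{2}^{\ast}$ in Theorem~\ref{Doglas th}.

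For the forward implication, suppose $(\Phi,\Psi)_{K}$ is a $K$-bi-$g$-frame. By Theorem~\ref{Th3.7} there is $A>0$ with $S_{\Phi,\Psi}\ge A KK^{\ast}$, i.e. $(\sqrt{A}\,K)(\sqrt{A}\,K)^{\ast}\le 1^{2}\cdot S_{\Phi,\Psi}^{\frac12}\bigl(S_{\Phi,\Psi}^{\frac12}\bigr)^{\ast}$. This is statement~(2) of Theorem~\ref{Doglas th} with $\mathcal{T}_{1}=\sqrt{A}\,K$ and $\mathcal{T}_{2}=S_{\Phi,\Psi}^{\frac12}$, so statement~(3) yields $\sqrt{A}\,K=S_{\Phi,\Psi}^{\frac12}V$ for some $V\in\mathcal{B}(\mathcal{H})$; setting $U=A^{-\frac12}V\in\mathcal{B}(\mathcal{H})$ gives $K=S_{\Phi,\Psi}^{\frac12}U$.

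For the converse, assume $K=S_{\Phi,\Psi}^{\frac12}U$ with $U\in\mathcal{B}(\mathcal{H})$. If $U=0$ then $K=0$ and the lower $K$-bi-$g$-frame bound holds trivially while the upper one is inherited from the bi-$g$-frame, so assume $U\neq 0$. Using the self-adjointness of $S_{\Phi,\Psi}^{\frac12}$ and $UU^{\ast}\le\|U\|^{2}\mathcal{I}_{\mathcal{H}}$,
$$
KK^{\ast}=S_{\Phi,\Psi}^{\frac12}\,UU^{\ast}\,S_{\Phi,\Psi}^{\frac12}\le\|U\|^{2}\,S_{\Phi,\Psi}^{\frac12}S_{\Phi,\Psi}^{\frac12}=\|U\|^{2}\,S_{\Phi,\Psi},
$$
hence $S_{\Phi,\Psi}\ge\|U\|^{-2}KK^{\ast}$, and Theorem~\ref{Th3.7} (with $A=\|U\|^{-2}$) shows that $(\Phi,\Psi)_{K}$ is a $K$-bi-$g$-frame.

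I do not expect a genuine obstacle here: once Theorem~\ref{Th3.7} has been established, the whole statement is a packaging of the equivalence $S_{\Phi,\Psi}\ge A KK^{\ast}\iff K=S_{\Phi,\Psi}^{\frac12}U$, which is just Douglas' theorem applied to the factorization $S_{\Phi,\Psi}=S_{\Phi,\Psi}^{\frac12}\bigl(S_{\Phi,\Psi}^{\frac12}\bigr)^{\ast}$. The only points requiring a little care are the bookkeeping of the constants ($\sqrt{A}$ passing through Douglas' theorem and the emergence of $\|U\|^{-2}$ in the converse) and the explicit use of the hypothesis on $S_{\Phi,\Psi}^{\frac12}$, without which $S_{\Phi,\Psi}$ would not be presented in the form $\mathcal{T}_{2}\mathcal{T}_{2}^{\ast}$.
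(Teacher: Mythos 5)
Your proposal is correct and follows essentially the same route as the paper: both directions reduce to Theorem~\ref{Th3.7} together with Douglas' factorization applied to $S_{\Phi,\Psi}=S_{\Phi,\Psi}^{\frac12}\bigl(S_{\Phi,\Psi}^{\frac12}\bigr)^{\ast}$. The only (minor) differences are that in the converse you replace the second invocation of Theorem~\ref{Doglas th} by the direct estimate $UU^{\ast}\le\|U\|^{2}\mathcal{I}_{\mathcal{H}}$, and your bookkeeping of the constants ($\sqrt{A}$, $\|U\|^{-2}$) is cleaner than the paper's.
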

\begin{proof}
Assume that  $(\Phi, \Psi)_{K}$ is a $K$-bi-$g$-frames, by Theorem \ref{Th3.7},there exists $A>0$ such that
$$
A K K^{\ast} \leq S_{\Phi, \Psi}^{\frac{1}{2}} S_{\Phi, \Psi}^{\frac{1}{2}^{\ast}} .
$$

Then for each $x \in \mathcal{H}$,
$$\left\|K^{\ast} x\right\|^2 \leq \lambda^{-1}\left\|S_{\Phi, \Psi}^{\frac{1}{2}^{\ast}} x\right\|^2.$$ Therefore Theorem \ref{Doglas th}, $K=S_{\Phi, \Psi}^{\frac{1}{2}} U$, for some $U \in \mathcal{B}(\mathcal{H})$.

Conversely, let $K=S_{\Phi, \Psi}^{\frac{1}{2}} W$, for some $W \in \mathcal{B}(\mathcal{H})$. Then by Theorem \ref{Doglas th}, there is a positive number $\mu$ such that
$$
\left\|K^{\ast} x\right\| \leq \mu\left\|S_{\Phi, \Psi}^{\frac{1}{2}} x\right\|, \text { for all } x \in \mathcal{H}
$$
which implies that $$
\mu K K^{\ast} \leq S_{\Phi, \Psi}^{\frac{1}{2}} S_{\Phi, \Psi}^{\frac{1}{2}^{\ast}} .
$$ Since $S_{\Phi, \Psi}^{\frac{1}{2}^{\ast}}=S_{\Phi, \Psi}^{\frac{1}{2}}$ Then by Theorem \ref{Doglas th}, $(\Phi, \Psi)_{K}$ is a $K$-bi-$g$-frames for $\mathcal{H}$.
\end{proof}
\section{Operators on $K$-bi-$g$-frames in Hilbert Spaces}\label{s4}
In the following proposition we will require a necessary condition for the operator $\mathcal{T}$ for which $ (\Phi, \Psi)_{K}$  will be $\mathcal{T}$-bi-$g$-frame for $\mathcal{H}$ with respect to $\left\{\mathcal{K}_i\right\}_{\in I} $.
 \begin{proposition}
   Let $(\Phi, \Psi)_{K}$ be a $K$-bi-$g$-frames for $\mathcal{H}$. Let $\mathcal{T} \in \mathcal{B}(\mathcal{H})$ with $R(\mathcal{T}) \subseteq$ $\mathcal{R}(K)$. Then $(\Phi, \Psi)_{K}$ is a $\mathcal{T}$-bi-$g$-frame for $\mathcal{H}$.
\end{proposition}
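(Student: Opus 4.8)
The plan is to leave the upper bound untouched and work only on the lower bound, using the Douglas factorization theorem (Theorem \ref{Doglas th}) to transfer the lower estimate from $K^{\ast}$ to $\mathcal{T}^{\ast}$. Since $(\Phi,\Psi)_{K}$ is a $K$-bi-$g$-frame, there are constants $0<A\le B<\infty$ with
$$
A\left\|K^{\ast}x\right\|^2\le\sum_{i\in I}\langle\Phi_i x,\Psi_i x\rangle\le B\|x\|^2,\qquad\forall x\in\mathcal{H},
$$
and the right-hand inequality already shows $(\Phi,\Psi)_{K}$ satisfies the upper $\mathcal{T}$-bi-$g$-frame condition with the same bound $B$, with no change needed.

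For the lower bound, first I would dispose of the trivial case $\mathcal{T}=0$: then $\mathcal{T}^{\ast}=0$, so $A'\|\mathcal{T}^{\ast}x\|^2=0\le\sum_{i\in I}\langle\Phi_i x,\Psi_i x\rangle$ holds for any $A'>0$. Assuming $\mathcal{T}\neq 0$, the hypothesis $\mathcal{R}(\mathcal{T})\subseteq\mathcal{R}(K)$ together with Theorem \ref{Doglas th} (the implication $(1)\Rightarrow(2)$) yields a constant $\lambda>0$ such that $\mathcal{T}\mathcal{T}^{\ast}\le\lambda^2\,KK^{\ast}$, equivalently
$$
\left\|\mathcal{T}^{\ast}x\right\|^2=\langle\mathcal{T}\mathcal{T}^{\ast}x,x\rangle\le\lambda^2\langle KK^{\ast}x,x\rangle=\lambda^2\left\|K^{\ast}x\right\|^2,\qquad\forall x\in\mathcal{H}.
$$

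Combining the two displays gives, for every $x\in\mathcal{H}$,
$$
\frac{A}{\lambda^2}\left\|\mathcal{T}^{\ast}x\right\|^2\le A\left\|K^{\ast}x\right\|^2\le\sum_{i\in I}\langle\Phi_i x,\Psi_i x\rangle\le B\|x\|^2,
$$
so $(\Phi,\Psi)_{K}$ is a $\mathcal{T}$-bi-$g$-frame for $\mathcal{H}$ with bounds $A/\lambda^2$ and $B$. There is no real obstacle here; the only point that needs a word of care is confirming that the Douglas constant can be taken strictly positive (which is exactly why the case $\mathcal{T}=0$ is separated out), after which the argument is a one-line chain of inequalities.
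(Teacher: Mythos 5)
Your proof is correct and follows essentially the same route as the paper: apply the Douglas range-inclusion theorem to get $\mathcal{T}\mathcal{T}^{\ast}\le\lambda^{2}KK^{\ast}$, hence $\|\mathcal{T}^{\ast}x\|^{2}\le\lambda^{2}\|K^{\ast}x\|^{2}$, and combine with the $K$-bi-$g$-frame inequality to obtain the bounds $A/\lambda^{2}$ and $B$. Your extra care with the trivial case $\mathcal{T}=0$ (so that the Douglas constant can be taken strictly positive) is a small refinement the paper glosses over, but it does not change the argument.
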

\begin{proof}
Suppose that $(\Phi, \Psi)_{K}$ is a $K$-bi-$g$-frames for $\mathcal{H}$. Then there are positive constants $0<A \leq B<\infty$ such that
$$
A\left\|K^* x\right\|^2 \leq  \sum_{i\in I}\langle \Phi_i x,\Psi_i x \rangle \leq B\|x\|^2, \text { for all } x \in \mathcal{H} .
$$

Since $R(\mathcal{T}) \subseteq \mathcal{R}(K)$, by Theorem \ref{Doglas th}, there exists $\alpha>0$ such that $\mathcal{T} \mathcal{T}^{\ast} \leq \alpha^2 K K^*$.

Hence, 
$$
\frac{A}{\alpha^2}\left\|\mathcal{T}^{\ast} x\right\|^2 \leq A\left\|K^* x\right\|^2 \leq  \sum_{i\in I}\langle \Phi_i x,\Psi_i x \rangle \leq B\|x\|^2, \text { for all } x \in \mathcal{H} .
$$

Hence $(\Phi, \Psi)_{K}$ is a $\mathcal{T}$-bi-$g$-frame for $\mathcal{H}$.
\end{proof}

\begin{theorem}
 Let $(\Phi, \Psi)_{K}$ be a $K$-bi-$g$-frames for $\mathcal{H}$ with bi-$g$-frame operator $S_{\Phi, \Psi}$ and let $\mathcal{T}$ be a positive operator. Then $(\Phi+\mathcal{T}\Phi, \Psi+\mathcal{T}\Psi)_{K}=\left(\left\{\Phi_i+\mathcal{T}\Phi_i\right\}_{i\in I},\left\{\Psi_i+\mathcal{T}\Psi_i\right\}_{i\in I}\right)$ is a $K$-bi-$g$-frames. 
 
 Moreover for any $n\in\mathbb{N}^{\ast}$, $\left(\left\{\Phi_i+\mathcal{T}^{n}\Phi_i\right\}_{i\in I},\left\{\Psi_i+\mathcal{T}^{n}\Psi_i\right\}_{i\in I}\right)$ is a $K$-bi-$g$-frames for $\mathcal{H}$.
\end{theorem}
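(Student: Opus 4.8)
The plan is to route everything through the bi-$g$-frame operator of the perturbed pair. Fix $n\in\mathbb{N}^{*}$ and set $U:=I_{\mathcal{H}}+\mathcal{T}^{n}$. Since $\mathcal{T}$ is positive it is self-adjoint, hence $\mathcal{T}^{n}$ is positive and self-adjoint, so $U=U^{*}$, $U\ge I_{\mathcal{H}}$, $U$ is invertible with $\|U^{-1}\|\le 1$, and $\|U\|\le 1+\|\mathcal{T}\|^{n}$. A termwise computation, $(\Psi_i+\mathcal{T}^{n}\Psi_i)^{*}(\Phi_i+\mathcal{T}^{n}\Phi_i)=U\,\Psi_i^{*}\Phi_i\,U$, gives that the bi-$g$-frame operator of $\bigl(\{\Phi_i+\mathcal{T}^{n}\Phi_i\}_{i\in I},\{\Psi_i+\mathcal{T}^{n}\Psi_i\}_{i\in I}\bigr)$ is
$$
S_{n}=U\,S_{\Phi,\Psi}\,U .
$$
Because $\langle S_{\Phi,\Psi}x,x\rangle=\sum_{i\in I}\langle\Phi_ix,\Psi_ix\rangle$ is real and $\ge A\|K^{*}x\|^{2}\ge0$ for every $x$, the operator $S_{\Phi,\Psi}$ is self-adjoint and positive; hence so is $S_{n}$, and $\langle S_{n}x,x\rangle=\langle S_{\Phi,\Psi}Ux,Ux\rangle$ for all $x\in\mathcal{H}$.

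The upper bound is immediate: inserting $y=Ux$ into the upper $K$-bi-$g$-frame inequality for $(\Phi,\Psi)_{K}$ yields
$$
\sum_{i\in I}\bigl\langle(\Phi_i+\mathcal{T}^{n}\Phi_i)x,(\Psi_i+\mathcal{T}^{n}\Psi_i)x\bigr\rangle=\langle S_{\Phi,\Psi}Ux,Ux\rangle\le B\|Ux\|^{2}\le B\bigl(1+\|\mathcal{T}\|^{n}\bigr)^{2}\|x\|^{2}.
$$

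For the lower bound I would first record that the lower $K$-bi-$g$-frame inequality, together with self-adjointness of $S_{\Phi,\Psi}$ and of $KK^{*}$, means $S_{\Phi,\Psi}\ge A\,KK^{*}$; conjugating by $U$ gives
$$
S_{n}=U\,S_{\Phi,\Psi}\,U\ \ge\ A\,U\,KK^{*}\,U\ =\ A\,(UK)(UK)^{*}.
$$
It then remains to find $A'>0$ with $(UK)(UK)^{*}\ge A'\,KK^{*}$; by Theorem \ref{Doglas th} this is exactly the range inclusion $\mathcal{R}(K)\subseteq\mathcal{R}(UK)$, which I would deduce from the boundedness and invertibility of $U$. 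Granting this, $S_{n}\ge A'A\,KK^{*}$, i.e. $A'A\|K^{*}x\|^{2}\le\sum_{i\in I}\langle(\Phi_i+\mathcal{T}^{n}\Phi_i)x,(\Psi_i+\mathcal{T}^{n}\Psi_i)x\rangle$, and the perturbed pair is a $K$-bi-$g$-frame. Note that the case $n=1$ and the general case are proved by the very same computation (with $\mathcal{T}^{n}$ in place of $\mathcal{T}$ throughout), so no separate induction is needed; in particular one cannot obtain the general case by iterating the $n=1$ statement, since $(I_{\mathcal{H}}+\mathcal{T})^{n}\ne I_{\mathcal{H}}+\mathcal{T}^{n}$.

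The delicate point is the last step of the lower-bound argument, passing from $S_{\Phi,\Psi}\ge A\,KK^{*}$ to $U S_{\Phi,\Psi}U\ge A'\,KK^{*}$, equivalently the estimate $\|K^{*}Ux\|\ge c\,\|K^{*}x\|$. This is where I would be most careful, since conjugation by a positive invertible operator need not preserve an operator inequality against a fixed positive operator; I expect to need the invertibility of $U$ (hence of $I_{\mathcal{H}}+\mathcal{T}^{n}$) in an essential way. For instance, if $(\Phi,\Psi)$ is moreover an ordinary bi-$g$-frame, then $S_{\Phi,\Psi}\ge cI_{\mathcal{H}}$ forces $S_{n}\ge cU^{2}\ge cI_{\mathcal{H}}\ge (c/\|K\|^{2})KK^{*}$ and the difficulty disappears; in the general case this is the one inequality on which the whole argument turns.
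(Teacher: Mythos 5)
Your route is essentially the paper's: read $\mathcal{T}^{n}\Phi_i$ as $\Phi_i\mathcal{T}^{n}$ so that the perturbed bi-$g$-frame operator is $S_n=U S_{\Phi,\Psi}U$ with $U=I+\mathcal{T}^{n}$, dispose of the upper bound, and reduce the lower bound to an operator inequality against $KK^{*}$ via Theorem \ref{Th3.7}. Your conjugation step $S_{\Phi,\Psi}\ge A\,KK^{*}\Rightarrow US_{\Phi,\Psi}U\ge A\,(UK)(UK)^{*}$ is correct, but the step you explicitly left open --- finding $A'>0$ with $(UK)(UK)^{*}\ge A'KK^{*}$, equivalently (Theorem \ref{Doglas th}) $\mathcal{R}(K)\subseteq\mathcal{R}(UK)$, equivalently $\|K^{*}Ux\|\ge c\|K^{*}x\|$ --- does \emph{not} follow from the invertibility of $U$: invertibility only gives $\mathcal{R}(UK)=U\mathcal{R}(K)$, and $U$ can move $\mathcal{R}(K)$ off itself. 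In fact the inequality is false in general, and with it the theorem in the form both you and the paper prove it. Take $\mathcal{H}=\mathbb{C}^{2}$, $I=\{1\}$, $\mathcal{K}_1=\operatorname{span}\{e_1\}$, and let $K=\Phi_1=\Psi_1$ be the orthogonal projection onto $\mathcal{K}_1$; then $S_{\Phi,\Psi}=KK^{*}$ and $(\Phi,\Psi)_K$ is a Parseval $K$-bi-$g$-frame. Let $n=1$ and let $\mathcal{T}$ be the positive (even invertible) operator with $\mathcal{T}e_1=2e_1+e_2$, $\mathcal{T}e_2=e_1+2e_2$. Then $U e_1=3e_1+e_2=:v$ and $S_1x=US_{\Phi,\Psi}Ux=\langle x,v\rangle v$, so for $x=e_1-3e_2$ one has $\sum_{i}\langle(\Phi_i+\mathcal{T}\Phi_i)x,(\Psi_i+\mathcal{T}\Psi_i)x\rangle=|\langle x,v\rangle|^{2}=0$ while $\|K^{*}x\|^{2}=1$: no lower bound $A'>0$ exists, so the perturbed pair is not a $K$-bi-$g$-frame.

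So the point you singled out as "the one inequality on which the whole argument turns" is exactly right, but it cannot be closed without extra hypotheses; your instinct that conjugation by a positive invertible operator need not preserve an inequality against a fixed $KK^{*}$ is precisely what goes wrong. For what it is worth, the paper's own proof stumbles at the same spot: it asserts $S_{\Phi,\Psi}+S_{\Phi,\Psi}\mathcal{T}+\mathcal{T}S_{\Phi,\Psi}+\mathcal{T}S_{\Phi,\Psi}\mathcal{T}\ge S_{\Phi,\Psi}$ "since $\mathcal{T}$ is positive", but the cross term $S_{\Phi,\Psi}\mathcal{T}+\mathcal{T}S_{\Phi,\Psi}$ need not be positive when $S_{\Phi,\Psi}$ and $\mathcal{T}$ do not commute, and the example above violates that inequality as well. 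A repaired statement needs an additional assumption, e.g. $\mathcal{T}S_{\Phi,\Psi}=S_{\Phi,\Psi}\mathcal{T}$ (then $S_{\Phi,\Psi}\mathcal{T}^{n}\ge 0$, hence $US_{\Phi,\Psi}U\ge S_{\Phi,\Psi}\ge A\,KK^{*}$ and Theorem \ref{Th3.7} finishes), or the stronger hypothesis you already noted, that $(\Phi,\Psi)$ is an ordinary bi-$g$-frame, in which case $S_{\Phi,\Psi}\ge cI$ gives $US_{\Phi,\Psi}U\ge cU^{2}\ge cI\ge (c/\|K\|^{2})KK^{*}$. Your remark that the general $n$ cannot be obtained by iterating the case $n=1$ is also correct and worth keeping in any revision.
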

\begin{proof}
Suppose that $(\Phi, \Psi)_{K}$ is a $K$-bi-$g$-frames for $\mathcal{H}$. Then by Theorem \ref{Th3.7} , there exists $m>0$ such that $S_{\Phi, \Psi} \geq m K K^*$. For every $x \in \mathcal{H}$, we have
$$
\begin{aligned}
S_{(\Phi+\mathcal{T}\Phi),( \Psi+\mathcal{T}\Psi)}&=\sum_{i\in I}\left(\Psi_i+\mathcal{T} \Psi_i\right)^{\ast}\left(\Phi_i+\mathcal{T} \Phi_i\right)\\
 & =(I+\mathcal{T})^{\ast} \sum_{i\in I}\Psi_i^{\ast}\Phi_i (I+\mathcal{T})   \\
& =(I+\mathcal{T})^{\ast} S_{\Phi,\Psi}(I+\mathcal{T}) .
\end{aligned}
$$
Hence the frame operator for $(\Phi+\mathcal{T}\Phi, \Psi+\mathcal{T}\Psi)_{K}$ is $(I+\mathcal{T})^{\ast} S_{\Phi,\Psi}(I+\mathcal{T})$.  
Since $\mathcal{T}$ is positive operator we get,
$$
(I+\mathcal{T})^{\ast} S_{\Phi,\Psi}(I+\mathcal{T})= S_{\Phi,\Psi}+ S_{\Phi,\Psi} \mathcal{T}+\mathcal{T}^{\ast}  S_{\Phi,\Psi}+\mathcal{T}^{\ast}  S_{\Phi,\Psi} \mathcal{T} \geq  S_{\Phi,\Psi} \geq m K K^*,
$$
Once again, applying Theorem \ref{Th3.7}, we can conclude that $(\Phi+\mathcal{T}\Phi, \Psi+\mathcal{T}\Psi)_{K}$ is a $K$-bi-$g$-frames for $\mathcal{H}$.

Now, for any $n\in\mathbb{N}^{\ast}$, the frame operator for
 $$ S_{(\Phi+\mathcal{T}^{n}\Phi),( \Psi+\mathcal{T}^{n}\Psi)}=\left(I+\mathcal{T}^{n}\right)^{\ast} S_{\Phi,\Psi}(I+\left.\mathcal{T}^{n}\right) \geq S_{\Phi,\Psi}. $$ Hence $\left(\left\{\Phi_i+\mathcal{T}^{n}\Phi_i\right\}_{i\in I},\left\{\Psi_i+\mathcal{T}^{n}\Psi_i\right\}_{i\in I}\right)$ is a $K$-bi-$g$-frames for $\mathcal{H}$.
\end{proof}

\begin{theorem}
Let $K \in \mathcal{B}(\mathcal{H})$ and $(\Phi, \Psi)_{K}$ be a $K$-bi-$g$-frames for $\mathcal{H}$ with respect to $\left\{\mathcal{K}_i\right\}_{i \in I}$, and that $M \in \mathcal{B}(\mathcal{H})$ has closed range with $M K=K M$. If $\mathcal{R}\left(K^*\right) \subset \mathcal{R}(M)$, then $(\Phi M^*, \Psi M^*)_{K}=\left(\left\{\Phi_i M^*\right\}_{i \in I}, \left\{\Psi_i M^*\right\}_{i \in I}\right)$ is a $K$-bi-$g$-frame for $\mathcal{H}$ with respect to $\left\{\mathcal{K}_i\right\}_{i \in I}$.
\end{theorem}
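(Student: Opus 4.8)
The plan is to verify the two defining inequalities of a $K$-bi-$g$-frame for $(\Phi M^{*},\Psi M^{*})_{K}$ by substituting the vector $M^{*}x$ into the inequalities already known for $(\Phi,\Psi)_{K}$, and then to bound $\|K^{*}M^{*}x\|$ from below by $\|K^{*}x\|$, using the three hypotheses $MK=KM$, $M$ of closed range, and $\mathcal{R}(K^{*})\subseteq\mathcal{R}(M)$. Throughout, let $A,B$ denote the bounds of $(\Phi,\Psi)_{K}$, and note that $\sum_{i\in I}\langle\Phi_{i}y,\Psi_{i}y\rangle$ is a nonnegative real number for every $y\in\mathcal{H}$, since it lies between $A\|K^{*}y\|^{2}$ and $B\|y\|^{2}$; in particular the sums appearing below all converge and are nonnegative.

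For the upper estimate, replacing $x$ by $M^{*}x$ in the upper $K$-bi-$g$-frame inequality for $(\Phi,\Psi)_{K}$ gives, for all $x\in\mathcal{H}$,
\[
\sum_{i\in I}\langle\Phi_{i}M^{*}x,\Psi_{i}M^{*}x\rangle\le B\|M^{*}x\|^{2}\le B\|M\|^{2}\|x\|^{2}.
\]
For the lower estimate, observe first that $MK=KM$ gives, on taking adjoints, $K^{*}M^{*}=(MK)^{*}=(KM)^{*}=M^{*}K^{*}$. Replacing $x$ by $M^{*}x$ in the lower inequality for $(\Phi,\Psi)_{K}$ and using this commutation relation yields
\[
\sum_{i\in I}\langle\Phi_{i}M^{*}x,\Psi_{i}M^{*}x\rangle\ge A\|K^{*}M^{*}x\|^{2}=A\|M^{*}K^{*}x\|^{2}.
\]
It remains to find $c>0$ with $\|M^{*}K^{*}x\|\ge c\|K^{*}x\|$ for all $x\in\mathcal{H}$. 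Since $M$ has closed range, the (bounded) pseudo-inverse $M^{+}$ exists, $MM^{+}$ is the orthogonal projection of $\mathcal{H}$ onto $\mathcal{R}(M)$, and hence, taking adjoints, $(M^{+})^{*}M^{*}$ is that same orthogonal projection. Because $K^{*}x\in\mathcal{R}(K^{*})\subseteq\mathcal{R}(M)$, we obtain $(M^{+})^{*}M^{*}K^{*}x=K^{*}x$, so that $\|K^{*}x\|\le\|M^{+}\|\,\|M^{*}K^{*}x\|$; thus $c=\|M^{+}\|^{-1}$ works. Putting the two estimates together shows that $(\Phi M^{*},\Psi M^{*})_{K}$ is a $K$-bi-$g$-frame for $\mathcal{H}$ with respect to $\{\mathcal{K}_{i}\}_{i\in I}$, with bounds $A\|M^{+}\|^{-2}$ and $B\|M\|^{2}$.

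The only step carrying real content is the last one, where the qualitative inclusion $\mathcal{R}(K^{*})\subseteq\mathcal{R}(M)$ must be turned into the quantitative estimate $\|M^{*}K^{*}x\|\ge c\|K^{*}x\|$; this is precisely where the closed range of $M$ is used, through the boundedness of $M^{+}$. An alternative to the pseudo-inverse is to note that $M$ of closed range forces $M^{*}$ of closed range, hence $M^{*}$ is bounded below on $\mathcal{N}(M^{*})^{\perp}=\overline{\mathcal{R}(M)}=\mathcal{R}(M)$, and to apply this to $K^{*}x\in\mathcal{R}(M)$; one could also route it through Theorem \ref{Doglas th} by writing $K^{*}=MU$. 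The commutation hypothesis $MK=KM$ is used only to pass from $\|K^{*}M^{*}x\|$ to $\|M^{*}K^{*}x\|$, and everything else is a routine substitution.
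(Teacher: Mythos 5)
Your proof is correct and follows essentially the same route as the paper: both arguments substitute $M^{*}x$ into the $K$-bi-$g$-frame inequalities for $(\Phi,\Psi)_{K}$, use $K^{*}M^{*}=M^{*}K^{*}$ from $MK=KM$, and recover $\|K^{*}x\|\le\|M^{+}\|\,\|K^{*}M^{*}x\|$ via the pseudo-inverse and the inclusion $\mathcal{R}(K^{*})\subseteq\mathcal{R}(M)$, arriving at the same bounds $A\|M^{+}\|^{-2}$ and $B\|M\|^{2}$. The only cosmetic difference is the order in which you apply the commutation relation and the projection identity $K^{*}x=MM^{+}K^{*}x=(M^{+})^{*}M^{*}K^{*}x$.
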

\begin{proof}
 For every $x \in \mathcal{H}$, we have
 $$
A\left\|K^* x\right\|^2 \leq  \sum_{i\in I}\langle \Phi_i x,\Psi_i x \rangle \leq B\|x\|^2 .
$$
Then for $M \in \mathcal{B}(\mathcal{H})$, we get
$$
\sum_{i \in I}\langle \Phi_i M^* x,\Psi_i M^* x \rangle\leq B\left\|M^* x\right\|^2 \leq B\|M\|^2\|x\|^2 .
$$
Since $M$ has closed range and $\mathcal{R}\left(K^*\right) \subset \mathcal{R}(M)$,
$$
\begin{aligned}
\left\|K^* x\right\|^2 & =\left\|M M^{+} K^* x\right\|^2\\ &=\left\|\left(M^{+}\right)^* M^* K^* x\right\|^2 \\
& =\left\|\left(M^{+}\right)^* K^* M^* x\right\|^2\\  &\leq\left\|M^{+}\right\|^2\left\|K^* M^* x\right\|^2.
\end{aligned}
$$
On the other hand, we have
$$
\sum_{i \in I}\langle \Phi_i M^* x,\Psi_i M^* x \rangle \geq A \left\|K^* M^* x\right\|^2 \geq A\left\|M^{+}\right\|^{-2}\left\|K^* x\right\|^2
$$
Hence $(\Phi M^*, \Psi M^*)_{K}$ is a $K$-bi-$g$-frame for $\mathcal{H}$ with respect to $\left\{\mathcal{K}_i\right\}_{i \in I}$.
\end{proof}
\begin{theorem}
Let $K, M \in B(\mathcal{H})$ and $(\Phi, \Psi)_{K}$ be a  $\delta$-tight $K$-$g$-frame for $\mathcal{H}$ with respect to $\left\{\mathcal{K}_i\right\}_{i \in I}$. If $\mathcal{R}\left(K^*\right)=\mathcal{H}$ and $M K=K M$, then $(\Phi M^*, \Psi M^*)_{K}=\left(\left\{\Phi_i M^*\right\}_{i \in I}, \left\{\Psi_i M^*\right\}_{i \in I}\right)$ is a $K$-bi-$g$-frame for $\mathcal{H}$ with respect to $\left\{\mathcal{K}_i\right\}_{i \in I}$ if and only if $M$ is surjective.
\end{theorem}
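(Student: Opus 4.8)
The plan is to collapse the whole equivalence onto the single statement ``$M^{\ast}$ is bounded below'', and then to read that off from Theorem~\ref{Doglas th} applied to the pair $(I,M)$. First I would record two elementary facts: $MK=KM$ yields $K^{\ast}M^{\ast}=M^{\ast}K^{\ast}$, and the $\delta$-tightness of $(\Phi,\Psi)_{K}$ means $\sum_{i\in I}\langle\Phi_i z,\Psi_i z\rangle=\delta\|K^{\ast}z\|^{2}$ for all $z\in\mathcal H$. Substituting $z=M^{\ast}x$ gives, for every $x\in\mathcal H$,
$$
\sum_{i\in I}\langle\Phi_i M^{\ast}x,\Psi_i M^{\ast}x\rangle=\delta\|K^{\ast}M^{\ast}x\|^{2}=\delta\|M^{\ast}K^{\ast}x\|^{2}.
$$
Since the right-hand side is $\le\delta\|M\|^{2}\|K\|^{2}\|x\|^{2}$, the upper bound for $(\Phi M^{\ast},\Psi M^{\ast})_{K}$ is automatic, so the real task reduces to deciding when there is a constant $A>0$ with $A\|K^{\ast}x\|^{2}\le\delta\|M^{\ast}K^{\ast}x\|^{2}$ for all $x$.

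For the ``if'' direction I would assume $M$ surjective, so $\mathcal R(I)\subset\mathcal R(M)$; Theorem~\ref{Doglas th} then furnishes $\lambda>0$ with $I\le\lambda^{2}MM^{\ast}$, i.e.\ $\|M^{\ast}y\|\ge\lambda^{-1}\|y\|$ for all $y$. Putting $y=K^{\ast}x$ gives exactly the lower estimate above with $A=\delta\lambda^{-2}$, and together with the automatic upper bound this shows $(\Phi M^{\ast},\Psi M^{\ast})_{K}$ is a $K$-bi-$g$-frame. For the ``only if'' direction I would start from the lower estimate $A\|K^{\ast}x\|^{2}\le\delta\|M^{\ast}K^{\ast}x\|^{2}$ guaranteed by the frame property, and invoke $\mathcal R(K^{\ast})=\mathcal H$ to replace $K^{\ast}x$ by an arbitrary $y\in\mathcal H$; this produces $\delta\|M^{\ast}y\|^{2}\ge A\|y\|^{2}$ for all $y$, i.e.\ $MM^{\ast}\ge(A/\delta)I$, and Theorem~\ref{Doglas th} again converts this into $\mathcal R(I)\subset\mathcal R(M)$, that is, $M$ is surjective.

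I do not expect a genuine obstacle here; the only thing requiring attention is bookkeeping of the two hypotheses and their distinct roles. The commutation $MK=KM$ (equivalently $M^{\ast}K^{\ast}=K^{\ast}M^{\ast}$) is exactly what allows $M^{\ast}$ to be pulled out of the tight-frame identity, while $\mathcal R(K^{\ast})=\mathcal H$ is precisely what is needed to promote the lower inequality, which a priori only controls $\|M^{\ast}\cdot\|$ on the range of $K^{\ast}$, to a bona fide lower bound for $M^{\ast}$ on all of $\mathcal H$; dropping it would break the necessity direction. If one preferred to avoid Theorem~\ref{Doglas th}, the same two passages could instead be handled with the cited characterization of injective closed-range operators together with the fact that $M$ and $M^{\ast}$ have simultaneously closed ranges.
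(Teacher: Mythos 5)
Your proof is correct, and for the necessity direction it follows essentially the same path as the paper: use $MK=KM$ to get $K^{\ast}M^{\ast}=M^{\ast}K^{\ast}$, plug $M^{\ast}x$ into the tightness identity to obtain $\delta\|M^{\ast}K^{\ast}x\|^{2}=\sum_{i\in I}\langle\Phi_i M^{\ast}x,\Psi_i M^{\ast}x\rangle\ge A\|K^{\ast}x\|^{2}$, and then use $\mathcal{R}(K^{\ast})=\mathcal{H}$ to conclude that $M^{\ast}$ is bounded below, hence $M$ is surjective. The differences are worth noting. First, the paper passes from ``$M^{\ast}$ bounded below'' to ``$M$ surjective'' via the cited characterization of injective closed-range operators, whereas you route both conversions (bounded below $\Leftrightarrow$ surjective) through Theorem~\ref{Doglas th} applied to the pair $(I,M)$; both are legitimate, and you even flag the alternative. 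Second, and more substantially, the paper's proof only establishes the ``only if'' half of the stated equivalence, while you also supply the ``if'' half: from surjectivity of $M$ you get $I\le\lambda^{2}MM^{\ast}$, hence the lower bound $A=\delta\lambda^{-2}$ on $\|K^{\ast}x\|^{2}$, and you correctly observe that the upper (Bessel) bound is automatic from the tightness identity, since $\delta\|K^{\ast}M^{\ast}x\|^{2}\le\delta\|K\|^{2}\|M\|^{2}\|x\|^{2}$ (convergence of the series is also immediate from that identity). So your argument is not only compatible with the paper's but more complete, since it actually proves the biconditional that the theorem asserts.
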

\begin{proof}
Suppose that $\left(\left\{\Phi_i M^*\right\}_{i \in I}, \left\{\Psi_i M^*\right\}_{i \in I}\right)$ is a $K$-bi-$g$-frame for $\mathcal{H}$ with respect to $\left\{\mathcal{K}_i\right\}_{i \in I}$ with frame bounds $A$ and $B$. that is for every $x\in\mathcal{H}$,
$$
A\left\|K^* x\right\|^2 \leq \sum_{i \in I}\langle \Phi_i M^* x,\Psi_i M^* x \rangle \leq B\|x\|^2 .
$$

and we have $$
A\left\|K^* x\right\|^2=\sum_{i\in I}\langle \Phi_i x,\Psi_i x \rangle, \text { for all } x \in \mathcal{H} .
$$ Since $K^* M^*=M^* K^*$, we obtain
$$
 \delta\left\|M^* K^* x\right\|^2= \delta\left\|K^* M^* x\right\|^2=\sum_{i \in I}\langle \Phi_i M^* x,\Psi_i M^* x \rangle .
$$
Hence
$$
\left\|M^* K^* x\right\|^2=\dfrac{1}{ \delta} \sum_{i \in I}\langle \Phi_i M^* x,\Psi_i M^* x \rangle \geq \dfrac{A}{ \delta}\left\|K^* x\right\|^2.
$$
from which we conclude that $M^*$ is injective since $\mathcal{R}\left(K^*\right)=\mathcal{H}$, $M$ is surjective as a consequence.
\end{proof}
\section{Stability of $K$-bi-$g$-frames for Hilbert spaces}\label{s5}
\begin{theorem} \label{th51}
 Suppose that $K \in \mathcal{B}(\mathcal{H})$ and $K$ has closed range. Let $\Phi=\left\{\Phi_i\;:\;\Phi_i\in\mathcal{B}(\mathcal{H},\mathcal{K}_i)\right\}_{i\in I}$ and $\Psi=\left\{\Psi_i\;:\;\Psi_i\in\mathcal{B}(\mathcal{H},\mathcal{K}_i)\right\}_{i\in I}$ are two $g$-Bessel sequences with bounds $B_{\Phi}$, $B_{\Psi}$ respectively. Assume that $(\Phi, \Psi)_{K}$ be a $K$-bi-$g$-frame for $\mathcal{H}$ with respect to $\left\{\mathcal{K}_i\right\}_{i \in I}$ with bounds $A$ and $B$ and $\left(\left\{\Lambda_i\right\}_{i \in I},\left\{\Gamma_i\right\}_{i \in I}\right)$ be a pair of sequences for $\mathcal{H}$ with respect to $\left\{\mathcal{K}_i\right\}_{i \in I}$. If there exist constants $\alpha, \beta, \gamma \in[0,1)$ such that $\max \left\{\alpha+\gamma , \beta\right\}<1$ and
$$
\begin{aligned}
\left\|\sum_{i \in J}\left(\Psi_i^* \Phi_i-\Gamma_i^* \Lambda_i\right) x\right\| 
\leq & \alpha\left\|\sum_{i \in J} \Psi_i^* \Phi_i x\right\|+\beta\left\|\sum_{i \in J} \Gamma_i^* \Lambda_i x\right\|+\gamma\left\| x\right\|.
\end{aligned}
$$
where $J$ is any finite subset of $I$, then    $\left(\left\{\Lambda_i\right\}_{i \in I},\left\{\Gamma_i\right\}_{i \in I}\right)$ is a $K$-bi-$g$-frame for $\mathcal{H}$ with respect to $\left\{\mathcal{K}_i\right\}_{i \in I}$ with bounds
$$
A \frac{\left[1-\left( \alpha+\gamma \right)\right]}{\left(1+\beta\right)}, \frac{\left(1+\alpha\right) \sqrt{B_{\Phi}B_{\Psi}}+\gamma}{1-\beta}.
$$
\end{theorem}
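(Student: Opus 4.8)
The plan is to check the two defining inequalities of a $K$-bi-$g$-frame for the pair $\left(\{\Lambda_i\}_{i\in I},\{\Gamma_i\}_{i\in I}\right)$ separately. Write $T_J x=\sum_{i\in J}\Psi_i^{*}\Phi_i x$ and $U_J x=\sum_{i\in J}\Gamma_i^{*}\Lambda_i x$ for a finite $J\subset I$; since $\Phi$ and $\Psi$ are $g$-Bessel, $T_J x\to S_{\Phi,\Psi}x$ as $J$ exhausts $I$, and all the estimates below are taken in that limit. For the \emph{upper bound} I would first use the triangle inequality inside the hypothesis to isolate $\|U_J x\|$:
$$\|U_J x\|\le\|T_J x\|+\|(T_J-U_J)x\|\le(1+\alpha)\|T_J x\|+\beta\|U_J x\|+\gamma\|x\|,$$
hence $(1-\beta)\|U_J x\|\le(1+\alpha)\|T_J x\|+\gamma\|x\|$. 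Then I would bound $\|T_J x\|=\sup_{\|y\|\le1}\bigl|\sum_{i\in J}\langle\Phi_i x,\Psi_i y\rangle\bigr|$ by the Cauchy--Schwarz inequality together with the $g$-Bessel bounds, obtaining $\|T_J x\|\le\sqrt{B_\Phi B_\Psi}\,\|x\|$. Combining, $\|U_J x\|\le\frac{(1+\alpha)\sqrt{B_\Phi B_\Psi}+\gamma}{1-\beta}\|x\|$ uniformly in $J$, so $\sum_{i\in I}\Gamma_i^{*}\Lambda_i x=S_{\Lambda,\Gamma}x$ is well defined with the same norm bound, and
$$\sum_{i\in I}\langle\Lambda_i x,\Gamma_i x\rangle=\langle S_{\Lambda,\Gamma}x,x\rangle\le\|S_{\Lambda,\Gamma}x\|\,\|x\|\le\frac{(1+\alpha)\sqrt{B_\Phi B_\Psi}+\gamma}{1-\beta}\|x\|^2 .$$

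For the \emph{lower bound} I would instead use the reverse triangle inequality in the hypothesis,
$$\|U_J x\|\ge\|T_J x\|-\|(T_J-U_J)x\|\ge(1-\alpha)\|T_J x\|-\beta\|U_J x\|-\gamma\|x\|,$$
so that $(1+\beta)\|U_J x\|\ge(1-\alpha)\|T_J x\|-\gamma\|x\|$ and, in the limit, $(1+\beta)\|S_{\Lambda,\Gamma}x\|\ge(1-\alpha)\|S_{\Phi,\Psi}x\|-\gamma\|x\|$. The $K$-bi-$g$-frame property of $(\Phi,\Psi)_{K}$ gives $A\|K^{*}x\|^2\le\langle S_{\Phi,\Psi}x,x\rangle\le\|S_{\Phi,\Psi}x\|\,\|x\|$, hence a lower estimate for $\|S_{\Phi,\Psi}x\|$; inserting this into the previous inequality and then passing from $\|S_{\Lambda,\Gamma}x\|$ back to $\langle S_{\Lambda,\Gamma}x,x\rangle=\sum_{i\in I}\langle\Lambda_i x,\Gamma_i x\rangle$ is meant to yield the announced bound $A\frac{1-(\alpha+\gamma)}{1+\beta}\|K^{*}x\|^2\le\sum_{i\in I}\langle\Lambda_i x,\Gamma_i x\rangle$.

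I expect this last step to be the main obstacle. Two points must be dealt with: the additive term $\gamma\|x\|$ produced by the perturbation is not, a priori, dominated by $\|K^{*}x\|$, and $\langle S_{\Lambda,\Gamma}x,x\rangle$ is not directly comparable to $\|S_{\Lambda,\Gamma}x\|$. This is precisely where the hypothesis that $K$ has closed range is used: by the characterization of injective closed-range operators recalled above, applied to the restriction of $K^{*}$ to $\mathcal{N}(K^{*})^{\perp}=\mathcal{R}(K)$, there is a constant $c>0$ with $\|x\|\le c^{-1/2}\|K^{*}x\|$ for $x$ in that subspace, which lets the $\gamma\|x\|$ term be absorbed there. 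The complementary part $x\in\mathcal{N}(K^{*})$, where $\|K^{*}x\|=0$ and only $\langle S_{\Lambda,\Gamma}x,x\rangle\ge0$ is required, is the most delicate point and should be handled through the same perturbation estimate together with the positivity of $S_{\Phi,\Psi}$ (which holds because $\langle S_{\Phi,\Psi}x,x\rangle=\sum_{i\in I}\langle\Phi_i x,\Psi_i x\rangle$ is real and $\ge A\|K^{*}x\|^2\ge0$). Tracking the constants through this splitting so that they collapse to exactly $A\frac{1-(\alpha+\gamma)}{1+\beta}$ is the part of the argument demanding the most care.
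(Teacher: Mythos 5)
Your upper-bound argument coincides with the paper's: isolate $\bigl\|\sum_{i\in J}\Gamma_i^{*}\Lambda_i x\bigr\|$ by the triangle inequality, bound $\bigl\|\sum_{i\in J}\Psi_i^{*}\Phi_i x\bigr\|\le\sqrt{B_\Phi B_\Psi}\,\|x\|$ by Cauchy--Schwarz and the $g$-Bessel bounds, and conclude that $\mathcal{M}x:=\sum_{i\in I}\Gamma_i^{*}\Lambda_i x$ is well defined with $\sum_{i\in I}\langle\Lambda_i x,\Gamma_i x\rangle=\langle\mathcal{M}x,x\rangle\le\frac{(1+\alpha)\sqrt{B_\Phi B_\Psi}+\gamma}{1-\beta}\|x\|^2$. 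That part is fine.

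The lower bound, however, is a genuine gap, and you in effect concede it yourself. The plan ``get $(1+\beta)\|\mathcal{M}x\|\ge(1-\alpha)\|S_{\Phi,\Psi}x\|-\gamma\|x\|$ and then pass from $\|\mathcal{M}x\|$ back to $\langle\mathcal{M}x,x\rangle$'' cannot work as stated: a lower bound on $\|\mathcal{M}x\|$ gives no control from below on the quadratic form $\langle\mathcal{M}x,x\rangle$, because nothing in the hypotheses makes $\mathcal{M}$ positive or even self-adjoint (an operator with $\|\mathcal{M}x\|=\|x\|$ can have $\langle\mathcal{M}x,x\rangle=0$ or negative). The proposed repair by splitting $\mathcal{H}=\mathcal{R}(K)\oplus\mathcal{N}(K^{*})$ does not remove this obstruction on either summand, introduces cross terms in $\langle\mathcal{M}x,x\rangle$ that you cannot discard, and would necessarily drag the closed-range constant $c$ of $K$ into the lower bound, whereas the asserted bound $A\frac{1-(\alpha+\gamma)}{1+\beta}$ contains no such constant. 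The paper's proof avoids this entirely by a different device: it substitutes $S_{\Phi,\Psi}^{-1}x$ into the perturbation hypothesis to obtain $\|x-\mathcal{M}S_{\Phi,\Psi}^{-1}x\|\le(\alpha+\gamma)\|x\|+\beta\|\mathcal{M}S_{\Phi,\Psi}^{-1}x\|$, then invokes the Casazza--Christensen perturbation lemma (Lemma \ref{Lemma2.5}) to conclude that $\mathcal{M}S_{\Phi,\Psi}^{-1}$ is bounded, invertible, with explicit two-sided norm estimates, and from this derives a lower estimate for $\mathcal{M}$ that is fed into the operator characterization $S\ge AKK^{*}$ of Theorem \ref{Th3.7}. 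So the missing idea in your proposal is precisely this use of the perturbation lemma on $\mathcal{M}S_{\Phi,\Psi}^{-1}$, which converts the hypothesis into invertibility information rather than a bare norm inequality; without it (or some substitute giving control of the form $\langle\mathcal{M}x,x\rangle$), your argument does not reach the stated lower frame bound. (Be aware the paper's own final step, from norm bounds on $\mathcal{M}$ to the form inequality required by Theorem \ref{Th3.7}, is itself rather terse, but its route through Lemma \ref{Lemma2.5} is the structural ingredient your proposal lacks.)
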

\begin{proof}
Suppose that $ J\subset I ,|J|<+\infty$. For any $x \in \mathcal{H}$, we have
$$
\begin{aligned}
\left\|\sum_{i \in J} \Gamma_i^* \Lambda_i x\right\| & \leq\left\|\sum_{i \in J}\left(\Gamma_i^* \Lambda_i-\Psi_i^* \Phi_i\right) x\right\|+\left\|\sum_{i \in J} \Psi_i^* \Phi_i x\right\| \\
& \leq\left(1+\alpha\right)\left\|\sum_{i \in J} \Psi_i^* \Phi_i x\right\|+\beta\left\|\sum_{i \in J} \Gamma_i^* \Lambda_i x\right\|+\gamma \left\| x\right\| .
\end{aligned}
$$
Then
$$
\left\|\sum_{i \in J} \Gamma_i^* \Lambda_i x\right\| \leq \frac{1+\alpha}{1-\beta}\left\|\sum_{i \in J} \Psi_i^* \Phi_i x\right\|+\frac{\gamma}{1-\beta}\left\| x\right\| .
$$

Since
$$
\begin{aligned}
\left\|\sum_{i \in J} \Psi_i^* \Phi_i x\right\| & =\sup _{\|y\|=1}\left|\left\langle\sum_{i \in J} \Psi_i^* \Phi_i x, y\right\rangle\right|\\ &=\sup _{\|y\|=1}\left|\left\langle\sum_{i \in J} \Phi_i x, \Psi_i y\right\rangle\right| \\
& \leq\left(\sum_{i \in J}\left\|\Phi_i x\right\|^2\right)^{\frac{1}{2}} \sup _{\|y\|=1}\left(\sum_{i \in J}\left\|\Psi_i y\right\|^2\right)^{\frac{1}{2}} \\
& \leq \sqrt{B_{\Phi}B_{\Psi}} \left\| x\right\|.
\end{aligned}
$$
 Hence, for all $x \in H$, we have
$$
\left\|\sum_{i \in J} \Gamma_i^* \Lambda_i x\right\| \leq \frac{\left(1+\alpha\right) \sqrt{B_{\Phi}B_{\Psi}}}{1-\beta}\left\| x\right\| +\frac{\gamma}{1-\beta}\left\| x\right\|= \frac{\left(1+\alpha\right) \sqrt{B_{\Phi}B_{\Psi}}+\gamma}{1-\beta} \|x\| .
$$
Thus $\sum_{i \in J} \Gamma_i^* \Lambda_i x$ is unconditionally convergent. we considere
$$
\mathcal{M} :\mathcal{H} \rightarrow \mathcal{H}, \quad \mathcal{M} x=\sum_{i \in J} \Gamma_i^* \Lambda_i x, x \in \mathcal{H} .
$$

Then $\mathcal{M}$ is well-defined, bounded and $$\|\mathcal{M}\| \leq \frac{\left(1+\alpha\right) \sqrt{B_{\Phi}B_{\Psi}}+\gamma}{1-\beta}.$$ For every $x \in \mathcal{H}$, we have 
\begin{equation} \label{eq51}
\langle \mathcal{M} x, x\rangle=\langle\sum_{i \in I}\Gamma_i^{\ast} \Lambda_i x,x\rangle=\sum_{i \in I}\langle \Lambda_i x,\Gamma_i x\rangle \leq\|\mathcal{M}\|\|x\|^2
\end{equation} 
It implies that $\left(\left\{\Lambda_i\right\}_{i \in J},\left\{\Gamma_i\right\}_{i \in J}\right)$ is a bi-$g$-Bessel sequence for $\mathcal{H}$ with respect to $\left\{\mathcal{K}_i\right\}_{i \in J}$.
Let $S_{\Phi, \Psi}$ be the bi-$g$-frame operator of $(\Phi, \Psi)_{K}$. According to the theorem hypothesis, we obtain
$$
\|(S_{\Phi, \Psi}-\mathcal{M}) x\| \leq \alpha\|S_{\Phi, \Psi} x\|+\beta\|\mathcal{M} x\|+\gamma\|x\|, \forall x \in H .
$$

Then,
$$
\begin{aligned}
\left\|x-\mathcal{M} S_{\Phi, \Psi}^{-1} x\right\| & \leq \alpha\|x\|+\beta\left\|\mathcal{M} S_{\Phi, \Psi}^{-1} x\right\|+\gamma\|x\| \\
& \leq\left(\alpha+\gamma\right)\|x\|+\beta\left\|\mathcal{M} S_{\Phi, \Psi}^{-1} x\right\|
\end{aligned}
$$
Since $0 \leq \max \left\{\alpha+\gamma, \beta\right\}<1$, According to Lemma \ref{Lemma2.5} , we get
$$
\frac{1-\beta}{1+\left(\alpha+\gamma\right)} \leq\left\|S_{\Phi, \Psi} \mathcal{M}^{-1}\right\| \leq \frac{1+\beta}{1-\left(\alpha+\gamma\right)} .
$$
Since
$$\|S_{\Phi, \Psi}\|=\|S_{\Phi, \Psi}\mathcal{M}^{-1}\mathcal{M}\|\leq \|S_{\Phi, \Psi}\mathcal{M}^{-1}\|\|\mathcal{M}\| $$
Therefore, 
\begin{equation}\label{eq52}
\|\mathcal{M}\| \geq \frac{A}{\left\|S_{\Phi, \Psi} \mathcal{M}^{-1}\right\|}\left\|K K^*\right\| \geq A \frac{\left[1-\left( \alpha+\gamma \right)\right]}{\left(1+\beta\right)}\left\|K K^*\right\| .
\end{equation}
Hence, by Theorem \ref{Th3.7}, we can conclude that $\left(\left\{\Lambda_i\right\}_{i \in I},\left\{\Gamma_i\right\}_{i \in I}\right)$ is a $K$-bi-$g$-frame for $\mathcal{H}$ with respect to $\left\{\mathcal{K}_i\right\}_{i \in I}$.
\end{proof}
\begin{corollary}
Suppose that $K \in \mathcal{B}(\mathcal{H})$ and $K$ has closed range. Let $\Phi=\left\{\Phi_i\;:\;\Phi_i\in\mathcal{B}(\mathcal{H},\mathcal{K}_i)\right\}_{i\in I}$ and $\Psi=\left\{\Psi_i\;:\;\Psi_i\in\mathcal{B}(\mathcal{H},\mathcal{K}_i)\right\}_{i\in I}$ are two $g$-Bessel sequences with bounds $B_{\Phi}$, $B_{\Psi}$ respectively. Assume that $(\Phi, \Psi)_{K}$ be a $K$-bi-$g$-frame for $\mathcal{H}$ with respect to $\left\{\mathcal{K}_i\right\}_{i \in I}$ with bounds $A$ and $B$ and $\left(\left\{\Lambda_i\right\}_{i \in I},\left\{\Gamma_i\right\}_{i \in I}\right)$ be a pair of sequences for $\mathcal{H}$ with respect to $\left\{\mathcal{K}_i\right\}_{i \in I}$.  If there exists constant $0<D<A$ such that
$$
\left\|\sum_{j \in J}\left(\Psi_j^* \Phi_j -\Gamma_j^* \Lambda_j\right) x\right\| \leq D\left\|K^* x\right\|, \forall x \in H,
$$
then $\left(\left\{\Lambda_i\right\}_{i \in I},\left\{\Gamma_i\right\}_{i \in I}\right)$ is a $K$-bi-$g$-frame for $H$ with respect to $\left\{H_j\right\}_{j \in J}$ with bounds $A\left(1-D\sqrt{\dfrac{B}{A}} \right)$ and $\left(\sqrt{B_{\Phi}B_{\Psi}}+D\sqrt{\dfrac{B}{A}}\right)$.
\end{corollary}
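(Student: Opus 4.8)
The plan is to deduce this corollary as a direct specialisation of Theorem~\ref{th51}, by converting the single constant $D$ into an admissible triple $(\alpha,\beta,\gamma)$. The first step is to extract an a priori norm estimate on $K^{*}$ from the frame inequalities themselves: since $(\Phi,\Psi)_{K}$ is a $K$-bi-$g$-frame with bounds $A$ and $B$, for every $x\in\mathcal{H}$ we have
$$
A\left\|K^{*}x\right\|^{2}\le\sum_{i\in I}\langle\Phi_i x,\Psi_i x\rangle\le B\|x\|^{2},
$$
so that $\left\|K^{*}x\right\|\le\sqrt{B/A}\,\|x\|$ for all $x\in\mathcal{H}$.

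The second step is to feed this into the perturbation hypothesis. For any finite subset $J\subset I$ and any $x\in\mathcal{H}$,
$$
\left\|\sum_{j\in J}\left(\Psi_j^{*}\Phi_j-\Gamma_j^{*}\Lambda_j\right)x\right\|\le D\left\|K^{*}x\right\|\le D\sqrt{B/A}\,\|x\|,
$$
which is precisely the hypothesis of Theorem~\ref{th51} with $\alpha=\beta=0$ and $\gamma=D\sqrt{B/A}$. With this choice the requirement $\max\{\alpha+\gamma,\beta\}<1$ reduces to $D\sqrt{B/A}<1$; this is exactly the condition ensuring the asserted lower bound $A\bigl(1-D\sqrt{B/A}\bigr)$ is positive, and it is the constraint on $D$ actually used.

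The final step is to invoke Theorem~\ref{th51} with these values. Its conclusion that $\left(\{\Lambda_i\}_{i\in I},\{\Gamma_i\}_{i\in I}\right)$ is a $K$-bi-$g$-frame for $\mathcal{H}$ with bounds
$$
A\,\frac{1-(\alpha+\gamma)}{1+\beta}\qquad\text{and}\qquad\frac{(1+\alpha)\sqrt{B_{\Phi}B_{\Psi}}+\gamma}{1-\beta}
$$
specialises, upon substituting $\alpha=\beta=0$ and $\gamma=D\sqrt{B/A}$, to the bounds $A\bigl(1-D\sqrt{B/A}\bigr)$ and $\sqrt{B_{\Phi}B_{\Psi}}+D\sqrt{B/A}$ claimed in the statement. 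Since the result is a specialisation, there is no real obstacle; the only points needing a line of justification are the estimate $\left\|K^{*}x\right\|\le\sqrt{B/A}\|x\|$ and the observation that the ``$\le D\left\|K^{*}x\right\|$'' form of the hypothesis is absorbed into the ``$\gamma\|x\|$'' term of Theorem~\ref{th51}. One could alternatively mimic the proof of Theorem~\ref{th51} in place --- controlling the operator $\mathcal{M}x=\sum_{i\in I}\Gamma_i^{*}\Lambda_i x$ from above by Cauchy--Schwarz and the Bessel bounds and from below using $\langle\mathcal{M}x,x\rangle\ge\langle S_{\Phi,\Psi}x,x\rangle-D\left\|K^{*}x\right\|\|x\|$, then applying Theorem~\ref{Th3.7} --- but this merely re-derives what Theorem~\ref{th51} already packages, so citing the theorem is the cleaner route.
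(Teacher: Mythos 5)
Your proof is correct and follows essentially the same route as the paper: the paper's own argument likewise derives $\left\|K^{*}x\right\|\le\sqrt{B/A}\,\|x\|$ from the frame inequalities and then invokes Theorem~\ref{th51} with $\alpha=\beta=0$ and $\gamma=D\sqrt{B/A}$. Your side remark that the condition genuinely required is $D\sqrt{B/A}<1$ (rather than the stated $0<D<A$) is accurate, and this imprecision is present in the paper's proof as well.
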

\begin{proof} For any $x \in \mathcal{H}$, we have  $$\left\|K^* x\right\| \leq  \dfrac{1}{\sqrt{A}}\left(\sum_{i\in I}\langle \Phi_i x,\Psi_i x \rangle\right) ^{\frac{1}{2}}$$
It is clear that $\sum_{j \in J} \Gamma_j^* \Lambda_j x$ is convergent for any $x \in H$. Then,
$$
\begin{aligned}
\left\|\sum_{j \in J}\left(\Psi_j^* \Phi_j -\Gamma_j^* \Lambda_j\right) x\right\| &\leq D\left\|K^* x\right\|\\ &\leq  \dfrac{1}{\sqrt{A}}\left(\sum_{i\in I}\langle \Phi_i x,\Psi_i x \rangle\right) ^{\frac{1}{2}}\\
&\leq  D\sqrt{\dfrac{B}{A}}\|x\|.
\end{aligned}
$$

By letting $\alpha, \beta=0, \gamma= D\sqrt{\dfrac{B}{A}}$ in Theorem \ref{th51}, $\left(\left\{\Lambda_i\right\}_{i \in I},\left\{\Gamma_i\right\}_{i \in I}\right)$ is a $K$-bi-$g$-frame for $\mathcal{H}$ with respect to $\left\{\mathcal{K}_i\right\}_{i \in I}$ with bounds $A\left(1-D\sqrt{\dfrac{B}{A}} \right)$ and $\left(\sqrt{B_{\Phi}B_{\Psi}}+D\sqrt{\dfrac{B}{A}}\right)$.
\end{proof}
\begin{theorem} \label{th53}
 Suppose that $K \in \mathcal{B}(\mathcal{H})$ and $K$ has closed range. Let $\Phi=\left\{\Phi_i\;:\;\Phi_i\in\mathcal{B}(\mathcal{H},\mathcal{K}_i)\right\}_{i\in I}$ and $\Psi=\left\{\Psi_i\;:\;\Psi_i\in\mathcal{B}(\mathcal{H},\mathcal{K}_i)\right\}_{i\in I}$ are two $g$-Bessel sequences with bounds $B_{\Phi}$, $B_{\Psi}$ respectively. Assume that $(\Phi, \Psi)_{K}$ be a $K$-bi-$g$-frame for $\mathcal{H}$ with respect to $\left\{\mathcal{K}_i\right\}_{i \in I}$ with bounds $A$ and $B$ and $\left(\left\{\Lambda_i\right\}_{i \in I},\left\{\Gamma_i\right\}_{i \in I}\right)$ be a pair of sequences for $\mathcal{H}$ with respect to $\left\{\mathcal{K}_i\right\}_{i \in I}$. If there exist constants $\alpha, \beta, \gamma \in[0,1)$ such that $\max \left\{\alpha+\gamma\sqrt{\dfrac{ B }{A}} , \beta\right\}<1$ and
$$
\begin{aligned}
\left\|\sum_{i \in J}\left(\Psi_i^* \Phi_i-\Gamma_i^* \Lambda_i\right) x\right\| 
\leq & \alpha\left\|\sum_{i \in J} \Psi_i^* \Phi_i x\right\|+\beta\left\|\sum_{i \in J} \Gamma_i^* \Lambda_i x\right\|+\gamma\left\|K^{\ast} x\right\|.
\end{aligned}
$$
where $J$ is any finite subset of $I$, then    $\left(\left\{\Lambda_i\right\}_{i \in I},\left\{\Gamma_i\right\}_{i \in I}\right)$ is a $K$-bi-$g$-frame for $\mathcal{H}$ with respect to $\left\{\mathcal{K}_i\right\}_{i \in I}$ with bounds
$$
A \frac{\left[1-\left( \alpha+\gamma\sqrt{\dfrac{ B }{A}}  \right)\right]}{\left(1+\beta\right)}, \frac{\left[\left(1+\alpha\right) \sqrt{B_{\Phi}B_{\Psi}}+\gamma\sqrt{\dfrac{ B }{A}}\right] }{1-\beta}.
$$
\end{theorem}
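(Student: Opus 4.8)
The plan is to reduce Theorem~\ref{th53} to Theorem~\ref{th51} by absorbing the factor $\|K^{\ast}x\|$ appearing in the perturbation inequality into an ordinary multiple of $\|x\|$. First I would exploit \emph{both} sides of the $K$-bi-$g$-frame inequality for $(\Phi,\Psi)_{K}$: since
$$
A\left\|K^{\ast}x\right\|^2 \leq \sum_{i\in I}\langle \Phi_i x,\Psi_i x\rangle \leq B\|x\|^2, \qquad \forall x\in\mathcal{H},
$$
dividing the outer terms by $A$ gives $\|K^{\ast}x\|^2\leq \frac{B}{A}\|x\|^2$, that is, $\|K^{\ast}x\|\leq \sqrt{B/A}\,\|x\|$ for all $x\in\mathcal{H}$.

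Substituting this estimate into the hypothesis of Theorem~\ref{th53}, for every finite subset $J\subset I$ and every $x\in\mathcal{H}$ we obtain
$$
\left\|\sum_{i\in J}\left(\Psi_i^{\ast}\Phi_i-\Gamma_i^{\ast}\Lambda_i\right)x\right\|
\leq \alpha\left\|\sum_{i\in J}\Psi_i^{\ast}\Phi_i x\right\|
+\beta\left\|\sum_{i\in J}\Gamma_i^{\ast}\Lambda_i x\right\|
+\gamma\sqrt{\tfrac{B}{A}}\,\|x\|.
$$
Now set $\gamma':=\gamma\sqrt{B/A}$. From the standing hypothesis $\max\{\alpha+\gamma\sqrt{B/A},\beta\}<1$ together with $\alpha,\gamma\geq0$ and $B/A>0$ one reads off that $\gamma'\in[0,1)$ and $\max\{\alpha+\gamma',\beta\}<1$. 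Hence the triple $(\alpha,\beta,\gamma')$ satisfies precisely the hypotheses of Theorem~\ref{th51}, with the same $g$-Bessel bounds $B_{\Phi},B_{\Psi}$ and the same $K$-bi-$g$-frame $(\Phi,\Psi)_{K}$ with bounds $A$ and $B$.

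Applying Theorem~\ref{th51} to $(\alpha,\beta,\gamma')$ then shows that $\left(\{\Lambda_i\}_{i\in I},\{\Gamma_i\}_{i\in I}\right)$ is a $K$-bi-$g$-frame for $\mathcal{H}$ with respect to $\{\mathcal{K}_i\}_{i\in I}$ with bounds
$$
A\,\frac{1-(\alpha+\gamma')}{1+\beta}=A\,\frac{\left[1-\left(\alpha+\gamma\sqrt{B/A}\right)\right]}{1+\beta},
\qquad
\frac{(1+\alpha)\sqrt{B_{\Phi}B_{\Psi}}+\gamma'}{1-\beta}=\frac{\left[(1+\alpha)\sqrt{B_{\Phi}B_{\Psi}}+\gamma\sqrt{B/A}\right]}{1-\beta},
$$
which are exactly the bounds claimed. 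The only point requiring care is the legitimacy of the reduction, namely checking that $\gamma'=\gamma\sqrt{B/A}$ genuinely lies in $[0,1)$ so that Theorem~\ref{th51} truly applies; this is guaranteed by the stated hypothesis $\max\{\alpha+\gamma\sqrt{B/A},\beta\}<1$. Should a self-contained argument be preferred, the proof of Theorem~\ref{th51} can instead be repeated verbatim, replacing every occurrence of $\gamma\|x\|$ by $\gamma\sqrt{B/A}\,\|x\|$ and invoking the estimate $\|K^{\ast}x\|\leq\sqrt{B/A}\,\|x\|$ once at the outset. I do not anticipate any substantive obstacle beyond this bookkeeping.
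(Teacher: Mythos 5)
Your proposal is correct and is essentially the paper's own argument: the paper simply states that the proof is analogous to Theorem~\ref{th51}, and your reduction — using $A\|K^{\ast}x\|^{2}\leq\sum_{i\in I}\langle\Phi_i x,\Psi_i x\rangle\leq B\|x\|^{2}$ to get $\|K^{\ast}x\|\leq\sqrt{B/A}\,\|x\|$ and then applying Theorem~\ref{th51} with $\gamma'=\gamma\sqrt{B/A}$ — is exactly the bookkeeping that analogy requires, and it reproduces the stated bounds, with the needed check $\gamma'\in[0,1)$ following from $\alpha+\gamma\sqrt{B/A}<1$.
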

\begin{proof}
The proof is analogous to that of Theorem \ref{th51}.
\end{proof}
\begin{theorem} \label{th54}
 Suppose that $K \in \mathcal{B}(\mathcal{H})$ and $K$ has closed range. Let $\Phi=\left\{\Phi_i\;:\;\Phi_i\in\mathcal{B}(\mathcal{H},\mathcal{K}_i)\right\}_{i\in I}$ and $\Psi=\left\{\Psi_i\;:\;\Psi_i\in\mathcal{B}(\mathcal{H},\mathcal{K}_i)\right\}_{i\in I}$ are two $g$-Bessel sequences with bounds $B_{\Phi}$, $B_{\Psi}$ respectively. Assume that $(\Phi, \Psi)_{K}$ be a $K$-bi-$g$-frame for $\mathcal{H}$ with respect to $\left\{\mathcal{K}_i\right\}_{i \in I}$ with bounds $A$ and $B$ and $\left(\left\{\Lambda_i\right\}_{i \in I},\left\{\Gamma_i\right\}_{i \in I}\right)$ be a pair of sequences for $\mathcal{H}$ with respect to $\left\{\mathcal{K}_i\right\}_{i \in I}$. If there exist constants $\alpha, \beta, \sigma, \gamma \in[0,1)$ such that $\max \left\{\alpha+\sigma+\gamma\sqrt{\dfrac{ B }{A}} , \beta\right\}<1$ and
$$
\begin{aligned}
\left\|\sum_{i \in J}\left(\Psi_i^* \Phi_i-\Gamma_i^* \Lambda_i\right) x\right\| 
\leq & \alpha\left\|\sum_{i \in J} \Psi_i^* \Phi_i x\right\|+\beta\left\|\sum_{i \in J} \Gamma_i^* \Lambda_i x\right\|+\sigma\left\| x\right\|+\gamma\left\|K^{\ast} x\right\|.
\end{aligned}
$$
where $J$ is any finite subset of $I$, then    $\left(\left\{\Lambda_i\right\}_{i \in I},\left\{\Gamma_i\right\}_{i \in I}\right)$ is a $K$-bi-$g$-frame for $\mathcal{H}$ with respect to $\left\{\mathcal{K}_i\right\}_{i \in I}$ with bounds
$$
A \frac{\left[1-\left( \alpha+\sigma+\gamma\sqrt{\dfrac{ B }{A}}  \right)\right]}{\left(1+\beta\right)}, \frac{\left[\left(1+\alpha\right) \sqrt{B_{\Phi}B_{\Psi}}+\sigma+\gamma\sqrt{\dfrac{ B }{A}}\right] }{1-\beta}.
$$
\end{theorem}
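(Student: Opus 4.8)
The plan is to deduce this from Theorem~\ref{th51} by absorbing the extra $\gamma\|K^{\ast}x\|$ term into the $\|x\|$-term. Since $(\Phi,\Psi)_{K}$ is a $K$-bi-$g$-frame with bounds $A$ and $B$, we have $A\|K^{\ast}x\|^{2}\le\sum_{i\in I}\langle\Phi_{i}x,\Psi_{i}x\rangle\le B\|x\|^{2}$ for every $x\in\mathcal{H}$, hence $\|K^{\ast}x\|\le\sqrt{B/A}\,\|x\|$. Inserting this into the hypothesis, $\sigma\|x\|+\gamma\|K^{\ast}x\|\le\bigl(\sigma+\gamma\sqrt{B/A}\bigr)\|x\|$, so with $\gamma':=\sigma+\gamma\sqrt{B/A}$ the perturbation inequality yields, for every finite $J\subset I$,
$$
\left\|\sum_{i\in J}\left(\Psi_{i}^{\ast}\Phi_{i}-\Gamma_{i}^{\ast}\Lambda_{i}\right)x\right\|\le\alpha\left\|\sum_{i\in J}\Psi_{i}^{\ast}\Phi_{i}x\right\|+\beta\left\|\sum_{i\in J}\Gamma_{i}^{\ast}\Lambda_{i}x\right\|+\gamma'\|x\|.
$$
The standing assumption $\max\{\alpha+\sigma+\gamma\sqrt{B/A},\beta\}<1$ is exactly $\max\{\alpha+\gamma',\beta\}<1$ (and in particular forces $\gamma'\in[0,1)$), so Theorem~\ref{th51} applies and delivers a $K$-bi-$g$-frame $\left(\{\Lambda_{i}\}_{i\in I},\{\Gamma_{i}\}_{i\in I}\right)$ with bounds $A\frac{1-(\alpha+\gamma')}{1+\beta}$ and $\frac{(1+\alpha)\sqrt{B_{\Phi}B_{\Psi}}+\gamma'}{1-\beta}$; expanding $\gamma'$ gives precisely the two numbers in the statement.

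If one prefers a self-contained argument, I would rerun the scheme of Theorem~\ref{th51} verbatim. For the upper bound, the triangle inequality gives $(1-\beta)\|\sum_{i\in J}\Gamma_{i}^{\ast}\Lambda_{i}x\|\le(1+\alpha)\|\sum_{i\in J}\Psi_{i}^{\ast}\Phi_{i}x\|+\sigma\|x\|+\gamma\|K^{\ast}x\|$; combining the Cauchy--Schwarz bound $\|\sum_{i\in J}\Psi_{i}^{\ast}\Phi_{i}x\|\le\sqrt{B_{\Phi}B_{\Psi}}\,\|x\|$ with $\|K^{\ast}x\|\le\sqrt{B/A}\,\|x\|$ shows that $\mathcal{M}x:=\sum_{i\in I}\Gamma_{i}^{\ast}\Lambda_{i}x$ converges unconditionally and is bounded with $\|\mathcal{M}\|\le\frac{(1+\alpha)\sqrt{B_{\Phi}B_{\Psi}}+\sigma+\gamma\sqrt{B/A}}{1-\beta}$, and since $\langle\mathcal{M}x,x\rangle=\sum_{i\in I}\langle\Lambda_{i}x,\Gamma_{i}x\rangle$ this is the upper $K$-bi-$g$-frame bound. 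For the lower bound, passing to the limit over $J$ gives $\|(S_{\Phi,\Psi}-\mathcal{M})x\|\le\alpha\|S_{\Phi,\Psi}x\|+\beta\|\mathcal{M}x\|+\sigma\|x\|+\gamma\|K^{\ast}x\|$; substituting $x\mapsto S_{\Phi,\Psi}^{-1}x$ and bounding $\sigma\|S_{\Phi,\Psi}^{-1}x\|+\gamma\|K^{\ast}S_{\Phi,\Psi}^{-1}x\|$ by a constant times $\|x\|$ puts $\mathcal{M}S_{\Phi,\Psi}^{-1}$ in the situation of Lemma~\ref{Lemma2.5} with parameters $\alpha+\sigma+\gamma\sqrt{B/A}$ and $\beta$; the resulting norm control of $S_{\Phi,\Psi}\mathcal{M}^{-1}$, together with $S_{\Phi,\Psi}\ge AKK^{\ast}$ from Theorem~\ref{Th3.7} and a second application of Theorem~\ref{Th3.7}, yields the lower bound $A\frac{1-(\alpha+\sigma+\gamma\sqrt{B/A})}{1+\beta}$.

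The one genuinely delicate step, just as in Theorems~\ref{th51} and~\ref{th53}, is the lower bound: one must know that $S_{\Phi,\Psi}$ is invertible before forming $S_{\Phi,\Psi}^{-1}x$, and one must keep the combined coefficient $\alpha+\sigma+\gamma\sqrt{B/A}$ strictly below $1$ so that Lemma~\ref{Lemma2.5} applies; the closed-range hypothesis on $K$ and the $K$-bi-$g$-frame inequalities are exactly what make this work. Everything else is a routine repetition of estimates already carried out for Theorem~\ref{th51}, so in the final write-up I would simply note that the proof proceeds along the lines of Theorem~\ref{th51} with the parameter $\gamma$ there replaced by $\sigma+\gamma\sqrt{B/A}$.
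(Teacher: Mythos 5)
Your proposal is correct and matches the paper's intent: the paper's own proof of this theorem is literally just the remark that it is ``similar to Theorem \ref{th51}'', and your reduction --- using $\|K^{\ast}x\|\le\sqrt{B/A}\,\|x\|$ to absorb $\sigma\|x\|+\gamma\|K^{\ast}x\|$ into a single term $\gamma'\|x\|$ with $\gamma'=\sigma+\gamma\sqrt{B/A}\in[0,1)$ and then invoking Theorem \ref{th51} --- makes that remark precise and yields exactly the stated bounds. Your sketched self-contained rerun is the same argument the paper has in mind, so no further changes are needed.
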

\begin{proof}
The proof is similar to that of Theorem \ref{th51}.
\end{proof}
\medskip

\section*{Declarations}

\medskip

\noindent \textbf{Availablity of data and materials}\newline
\noindent Not applicable.

\medskip

\noindent \textbf{Human and animal rights}\newline
\noindent We would like to mention that this article does not contain any studies
with animals and does not involve any studies over human being.

\medskip

\noindent \textbf{Conflict of interest}\newline
\noindent The authors declare that they have no competing interests.

\medskip

\noindent \textbf{Fundings} \newline
\noindent The authors declare that there is no funding available for this paper.

\medskip

\noindent \textbf{Authors' contributions}\newline
\noindent The authors equally conceived of the study, participated in its
design and coordination, drafted the manuscript, participated in the
sequence alignment, and read and approved the final manuscript. 

\medskip

\end{document}